\newcommand{\Z}{\mathbb{Z}}
\newcommand{\R}{\mathbb{R}}
\newcommand{\C}{\mathbb{C}}
\newcommand{\Sc}{\mathcal{S}}
\newcommand{\EE}{\mathcal{E}}
\newcommand{\DD}{\mathcal{D}}
\newcommand{\GL}{\mathrm{GL}}
\newcommand{\SL}{\mathrm{SL}}
\newcommand{\SO}{\mathrm{SO}}
\newcommand{\Sp}{\mathrm{Sp}}
\newcommand{\Irr}{\mathrm{Irr}}
\newcommand{\unit}{\mathrm{unit}}
\newcommand{\gp}{\mathrm{gp}}
\newcommand{\temp}{\mathrm{temp}}
\newcommand{\Jac}{\mathrm{Jac}}
\newcommand{\Cusp}{\mathrm{Cusp}}
\newcommand{\esupp}{\mathrm{ex.supp}}
\newcommand{\iif}{&\quad&\text{if }}
\newcommand{\resp}{resp.~}
\renewcommand{\1}{\mathbf{1}}
\newcommand{\ep}{\varepsilon}
\newcommand{\half}[1]{\frac{#1}{2}}
\newtheorem{thm}{Theorem}[section]
\newtheorem{prop}[thm]{Proposition}
\newtheorem{defi}[thm]{Definition}
\newtheorem{alg}[thm]{Algorithm}
\newtheorem{problem}[thm]{Problem}
\title{The set of local $A$-packets 
containing a given representation}
\author{Hiraku Atobe}
\date{}
\subjclass[2010]{Primary 22E50; Secondary 11S37}
\keywords{$A$-packets; Derivatives; Extended cuspidal supports}
\address{
Department of Mathematics, Hokkaido University,
Kita 10, Nishi 8, Kita-Ku, Sapporo, Hokkaido, 060-0810, Japan 
}
\email{
atobe@math.sci.hokudai.ac.jp
}
\begin{document}
\maketitle

\begin{abstract}
In this paper, 
we give an algorithm to 
determine all local $A$-packets 
containing a given irreducible representation of a $p$-adic classical group. 
Especially, we can determine 
whether a given irreducible representation is of Arthur type or not. 
\end{abstract}

%\section{Introduction}
%\section{Introduction}
\section{Introduction}\label{intro}
In a magnificent work \cite{Ar}, 
Arthur gave a classification of the discrete spectrum of 
square integral automorphic forms on
a quasi-split special orthogonal group $\SO_n$ or a symplectic group $\Sp_{2n}$ 
over a number field.
This classification says that the discrete spectrum of 
the space of square integrable automorphic forms 
is divided into disjoint subsets called \emph{global $A$-packets}. 
An element in a global $A$-packet 
is defined as the tensor product of elements of \emph{local $A$-packets}.
In other words, the local $A$-packets classify 
the local factors of square integrable automorphic representations.
The purpose of this paper is 
to give an algorithm to determine all local $A$-packets 
which contain a given irreducible representation. 
This result might be regarded as an ultimate form of 
the globalization of irreducible representations of $p$-adic classical groups.
\vskip 10pt

Let us describe our results.
Fix a non-archimedean local field $F$ of characteristic zero.
Let $G_n$ be a split special odd orthogonal group $\SO_{2n+1}(F)$ 
or a symplectic group $\Sp_{2n}(F)$ of rank $n$ over $F$. 
A (local) $A$-parameter for $G_n$ is a homomorphism 
\[
\psi \colon W_F \times \SL_2(\C) \times \SL_2(\C) \rightarrow \widehat{G}_n
\]
such that $\psi(W_F)$ is bounded, 
where $W_F$ is the Weil group of $F$
and $\widehat{G}_n$ is the complex dual group of $G_n$. 
Associated to an $A$-parameter $\psi$ for $G_n$, 
Arthur \cite[Theorem 1.5.1]{Ar} defined the (local) $A$-packet $\Pi_\psi$, 
which is a multi-set over $\Irr_{\unit}(G_n)$. 
Here, $\Irr_{(\unit)}(G_n)$ denotes 
the set of equivalence classes of irreducible (unitary) representations of $G_n$. 
In fact, M{\oe}glin \cite{Moe11c} showed that the $A$-packet $\Pi_\psi$ is multiplicity-free, 
i.e., $\Pi_\psi$ is a subset of $\Irr_{\unit}(G_n)$. 
We say that an irreducible representation $\pi$ of $G_n$ is \emph{of Arthur type}
if $\pi$ belongs to $\Pi_\psi$ for some $A$-parameter $\psi$ for $G_n$. 
\vskip 10pt

Unlike $L$-packets, $A$-packets do not give a classification of $\Irr_{\unit}(G_n)$. 
Indeed, there is an irreducible unitary representation $\pi$ which is not of Arthur type. 
Moreover, even if $\psi_1 \not\cong \psi_2$, 
the intersection $\Pi_{\psi_1} \cap \Pi_{\psi_2}$ is not necessarily empty.  
Therefore, the following problem occurs. 

\begin{problem}\label{pro1}
Let $\pi$ be an irreducible representation of $G_n$. 
\begin{enumerate}
\item
Determine whether $\pi$ is of Arthur type or not. 
\item
If $\pi$ is of Arthur type, 
list all $A$-parameters $\psi$ such that $\pi \in \Pi_\psi$. 
\end{enumerate}
\end{problem}

By a result of M{\oe}glin ({\cite[Theorem 6]{Moe06a}, \cite[Proposition 8.11]{X2}}), 
this problem is reduced to the good parity case. 
Here, we say that an irreducible representation $\pi$ of $G_n$ is \emph{of good parity} 
if $\pi$ can be embedded into a parabolically induced representation 
$\rho_1 \times \dots \times \rho_r \rtimes \sigma$ such that
\begin{itemize}
\item
$\rho_i$ is an irreducible cuspidal representation of $\GL_{d_i}(F)$
for $i=1,\dots,r$; 
\item
$\sigma$ is an irreducible cuspidal representation of 
a classical group $G_{n_0}$ of the same type as $G_n$; 
\item
for each $i=1,\dots,r$, 
there exists an \emph{integer} $m_i$ such that 
the parabolically induced representation $\rho_i|\cdot|^{m_i} \rtimes \sigma$ is reducible.
\end{itemize}
\vskip 10pt

As a refinement of M{\oe}glin's explicit construction of $A$-packets, 
in \cite{At}, 
we introduced a notion of \emph{extended multi-segments} $\EE$ for $G_n$. 
See Definition \ref{segments}. 
An extended multi-segment $\EE$ for $G_n$ explicitly gives 
\begin{itemize}
\item
a representation $\pi(\EE)$ of $G_n$, 
which is zero or an irreducible representation of good parity; 
and
\item
an $A$-parameter $\psi_\EE$ for $G_n$ of good parity.
\end{itemize}
Here, we say that an $A$-parameter $\psi$ for $G_n$ is \emph{of good parity} 
if all irreducible components of $\psi$ are self-dual of the same type as $\psi$. 
Moreover, for an $A$-parameter $\psi$ for $G_n$ of good parity, 
(after fixing an auxiliary datum)
the $A$-packet $\Pi_\psi$ is given as
\[
\Pi_\psi = \{\pi(\EE) \;|\; \psi_\EE \cong \psi\} \setminus \{0\}. 
\]
Therefore, Problem \ref{pro1} for irreducible representations of good parity
can be reformulated as follows: 

\begin{problem}\label{pro2}
Let $\pi$ be an irreducible representation of $G_n$ of good parity. 
\begin{enumerate}
\item
Determine whether 
there is an extended multi-segment $\EE$ such that $\pi(\EE) \cong \pi$. 
\item
In the affirmative case, 
determine all extended multi-segments $\EE$ such that $\pi(\EE) \cong \pi$. 
\end{enumerate}

\end{problem}

To solve this problem, 
we need two key notions.
The one is \emph{derivatives} and the other is \emph{extended cuspidal supports}. 
Fix an irreducible unitary cuspidal representation $\rho$ of $\GL_d(F)$ 
and a real number $x$. 
For an irreducible representation $\pi$ of $G_n$, 
define the \emph{$\rho|\cdot|^x$-derivative} $D_{\rho|\cdot|^x}(\pi)$
as a semisimple representation satisfying that 
\[
[\Jac_P(\pi)] = \rho|\cdot|^x \otimes D_{\rho|\cdot|^x}(\pi) + (\text{others}), 
\]
where $[\Jac_P(\pi)]$ is the semisimplification of 
the Jacquet module of $\pi$
along a standard parabolic subgroup $P$ of $G_n$ 
with Levi $\GL_d(F) \times G_{n-d}$. 
In \cite[Section 5]{At}, a composition of derivatives of $\pi(\EE)$
was described in terms of extended multi-segments. 
On the other hand, 
the \emph{extended cuspidal support} $\esupp(\pi)$ of 
an irreducible representation $\pi$ of $G_n$
is a refinement of the usual cuspidal support of $\pi$
(see Definition \ref{def.esupp}).
Unlike the usual cuspidal support, 
members of an $A$-packet $\Pi_\psi$ share an extended cuspidal support, 
which is determined by the diagonal restriction $\psi_d$ of $\psi$
(see Proposition \ref{ex.supp}). 
\vskip 10pt

Now we can roughly state our first main result, 
which solves Problem \ref{pro2} (1). 

\begin{alg}[Algorithm \ref{main.alg}]
Let $\pi$ be an irreducible representation of $G_n$ of good parity. 

\begin{description}
\item[Step $1^\pm$]
Suppose that there exist 
an irreducible unitary cuspidal representation $\rho$ of $\GL_d(F)$ 
and $x \geq 1$ or $x < 0$
such that $D_{\rho|\cdot|^{x}}(\pi) \not= 0$. 
In this case, one can construct 
an irreducible representation
$\pi^\pm$ of $G_{n^\pm}$ of good parity with $n^\pm < n$ 
by Definition \ref{d.d}.
Then $\pi$ is of Arthur type 
if and only if 
there exists an extended multi-segment $\EE^\pm$ for $G_{n^\pm}$
satisfying certain conditions such that $\pi(\EE^\pm) \cong \pi^\pm$. 
In this case, we can explicitly define $\EE$ from $\EE^\pm$ such that $\pi(\EE) \cong \pi$.

\item[Step $2$]
Otherwise, 
the extended cuspidal support $\esupp(\pi)$ of $\pi$ 
gives at most one $A$-parameter $\psi$ for $G_n$
such that 
$\pi$ is of Arthur type if and only if $\pi \in \Pi_\psi$. 
\end{description}
\end{alg}

To apply this algorithm, 
we need to solve Problem \ref{pro2} (2).
The following is the second main result. 
\begin{thm}[Theorem \ref{thm:s-eq}]
Let $\EE_1$ and $\EE_2$ be two extended multi-segments for $G_n$. 
Suppose that $\pi(\EE_1) \not= 0$. 
Then $\pi(\EE_1) \cong \pi(\EE_2)$ if and only if 
$\EE_2$ can be obtained from $\EE_1$
by  a finite chain of three operations (C), (UI), (P) defined in Definition \ref{CUIP}
and their inverses. 
\end{thm}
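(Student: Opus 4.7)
The plan is to prove each implication separately, with the nontrivial direction proceeding by induction on the rank $n$ and using the reduction afforded by Algorithm~\ref{main.alg}.

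For the easy direction, I verify directly that each of the operations (C), (UI), (P) and its inverse preserves the attached representation $\pi(\EE)$. This is a case-by-case check reading off Definition~\ref{CUIP} against the explicit M{\oe}glin-type construction of $\pi(\EE)$: each operation either permutes bookkeeping that manifestly does not affect the final induction or performs a symmetry at the level of segments that is already known from the description of $\pi(\EE)$. Iteration handles finite chains.

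For the nontrivial direction, assume $\pi := \pi(\EE_1) \cong \pi(\EE_2) \neq 0$, and apply Algorithm~\ref{main.alg} to $\pi$. If Step~$1^\pm$ applies, there exist $\rho$ and $x \geq 1$ or $x < 0$ with $D_{\rho|\cdot|^{x}}(\pi) \neq 0$, and the derivative formulas of \cite[Section 5]{At} produce, from each $\EE_i$, an extended multi-segment $\EE_i^\pm$ for the smaller group $G_{n^\pm}$ satisfying $\pi(\EE_i^\pm) \cong \pi^\pm$. Since $\EE$ can be recovered from $\EE^\pm$ by the explicit lift described in Step~$1^\pm$, the induction hypothesis on $G_{n^\pm}$ furnishes a chain of operations taking $\EE_1^\pm$ to $\EE_2^\pm$, which I then transport back through the reduction to obtain a chain between $\EE_1$ and $\EE_2$. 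I expect this lifting step to be the main technical obstacle: each atomic operation (C), (UI), (P) together with its applicability condition must be tracked carefully through the derivative reduction, and intermediate (possibly zero) extended multi-segments appearing in the chain must not obstruct the lift, so one must verify that the chain can be perturbed or reordered to remain in the locus where the lift is well defined.

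If instead Step~$2$ applies, then no derivative $D_{\rho|\cdot|^{x}}(\pi)$ with $x \geq 1$ or $x < 0$ is nonzero. By Proposition~\ref{ex.supp}, the extended cuspidal support $\esupp(\pi)$ singles out a unique $A$-parameter $\psi$ with $\pi \in \Pi_\psi$, so $\psi_{\EE_1} \cong \psi_{\EE_2} \cong \psi$. Hence $\EE_1$ and $\EE_2$ share the same underlying multi-segment up to its bookkeeping data (orderings and sign/parity assignments), and the task reduces to a finite combinatorial comparison inside the fiber $\{\EE \;|\; \psi_\EE \cong \psi,\ \pi(\EE) \cong \pi\}$. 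In this base case the three operations play complementary roles: (C) reorders segments, (UI) implements elementary shifts between adjacent segments, and (P) flips parity/sign assignments, and a direct case analysis exploiting that no nontrivial derivative is available will show that these generate the fiber transitively. Combining both cases closes the induction and completes the proof.
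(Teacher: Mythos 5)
Your overall plan (induction on rank via derivative reduction) resembles the paper's in spirit, but there are several genuine gaps that make the argument as sketched not work.

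First, you have misunderstood the operation (P). Looking at Definition~\ref{CUIP}, (P) does not ``flip parity/sign assignments''; it formally adds or removes a degenerate \emph{phantom} segment $([l-1,-l]_\rho, l, +1)$ or $([l-1/2,-l-1/2]_\rho,l,+1)$ whose index is minimal. The purpose of (P) is entirely different from what your base-case argument imagines, and this misreading propagates into the claim that the three operations ``generate the fiber transitively'' in the base case, which is neither what the paper shows nor supported by your sketch.

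Second, and most importantly, your proof never addresses extended multi-segments with segments having $B<0$. The paper first treats the non-negative case (all $B\geq 0$), where it suffices to use positive $\rho$-derivatives $D_{\rho|\cdot|^x}$ with $x>0$ (not merely $x\geq 1$) and the base case reduces, after Theorem~\ref{thm5}, to all $B=0$, where the extended cuspidal support immediately forces $\psi_{\EE_1}\cong\psi_{\EE_2}$ and hence $\EE_1=\EE_2$. The general case is then handled by shifting both $\EE_1,\EE_2$ by $z\gg 0$ to make them non-negative, but the shifts $\pi(\EE_{1,z})$ and $\pi(\EE_{2,z})$ are typically \emph{not} isomorphic; one must first apply (P) several times to each to restore the isomorphism before the non-negative case applies, and this is precisely where (P) is indispensable. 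Your sketch has no analogue of this step, and since your reduction rests on Algorithm~\ref{main.alg} whose Step~$1^+$ requires $x\geq 1$, you also lose the ability to reduce when the highest positive derivative occurs only at $x=1/2$, forcing you into a base case that is strictly larger than the paper's and for which you have no argument.

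Third, the lifting step you flag as ``the main technical obstacle'' is indeed not handled in your proposal, and the paper resolves it differently from what you describe. Rather than lifting an abstract chain from $\EE_1^\pm$ to $\EE_2^\pm$, the paper uses Theorem~\ref{thm5} to replace $\EE_1$ and $\EE_2$ by strongly equivalent $\EE_1^*,\EE_2^*$ whose top segments (those with $B$ extremal) are literally \emph{identical} multi-sets determined by the derivative data. Removing these identical pieces yields $\EE_1',\EE_2'$ of strictly smaller rank with $\pi(\EE_1')\cong\pi(\EE_2')\neq 0$, and the inductive chain from $\EE_1'$ to $\EE_2'$ can then be applied verbatim to $\EE_1^*$ to reach $\EE_2^*$. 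No transport or perturbation of chains through the lift is needed. Without some such structural normalization, your proposal's reassurance that ``the chain can be perturbed or reordered to remain in the locus where the lift is well defined'' is not a proof.
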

\vskip 10pt

This paper is organized as follows.
In Section \ref{s.pre}, we review 
$A$-parameters, $A$-packets, extended cuspidal supports, 
Langlands classification, and derivatives. 
After recalling results in the previous paper \cite{At}, 
we state main results and give some examples in Section \ref{s.main}. 
Finally, in Section \ref{s.proof}, 
we prove the main results.
\vskip 10pt

\noindent
{\bf Acknowledgement.}
We would like to thank Alberto M{\'i}nguez for helpful discussions and communication.
The author was supported by JSPS KAKENHI Grant Number 19K14494. 
\vskip 10pt

\noindent
\textbf{Notation.}
Let $F$ be a non-archimedean local field of characteristic zero.
The normalized absolute value is denoted by $|\cdot|$, 
which is also regarded as a character of $\GL_d(F)$ via composing with the determinant map. 
\par

Let $G_n$ be a split special odd orthogonal group $\SO_{2n+1}(F)$ or a symplectic group $\Sp_{2n}(F)$ 
of rank $n$ over $F$. 
The set of equivalence classes of irreducible smooth representations of a group $G$ is denoted by $\Irr(G)$. 
Let $\Irr_\unit(G_n)$ (\resp $\Irr_\temp(G_n)$) be the subset of $\Irr(G_n)$ consisting of
equivalence classes of irreducible unitary (\resp tempered) representations of $G_n$.
\par

The Weil group of $F$ is denoted by $W_F$. 
The group $\SL_2(\C)$ has a unique irreducible algebraic representation of dimension $a$, 
which is denoted by $S_a$.
We denote by $\widehat{G}_n$ the complex dual group of $G_n$. 
Namely, $\widehat{G}_n = \Sp_{2n}(\C)$ if $G_n = \SO_{2n+1}(F)$, and 
$\widehat{G}_n = \SO_{2n+1}(\C)$ if $G_n = \Sp_{2n}(F)$.  
\par

The set of equivalence classes of irreducible cuspidal representations of $\GL_d(F)$
is denoted by $\Cusp(\GL_d(F))$.
By the local Langlands correspondence for $\GL_d(F)$, 
we identify $\rho \in \Cusp(\GL_d(F))$ with 
an irreducible $d$-dimensional representation of $W_F$.
The subset of $\Cusp(\GL_d(F))$ consisting of unitary (\resp self-dual) elements 
is denoted by $\Cusp_\unit(\GL_d(F))$ (\resp $\Cusp^\bot(\GL_d(F))$).
\par

We will often extend the set theoretical language to multi-sets. 
Namely, we write a multi-set as $\{x, \dots, x, y, \dots, y, \ldots \}$.
When we use a multi-set, we will mention it.

%\section{Preliminary}
%\section{Preliminary}
\section{Preliminary}\label{s.pre}
In this section, we review several results on local $A$-packets.

%\subsection{$A$-parameters}
\subsection{$A$-parameters}
Recall that an \emph{$A$-parameter for $G_n$} is 
the $\widehat{G}_n$-conjugacy class of an admissible homomorphism
\[
\psi \colon W_F \times \SL_2(\C) \times \SL_2(\C) \rightarrow \widehat{G}_n
\]
such that the image of $W_F$ is bounded.
By composing with the standard representation of $\widehat{G}_n$, 
we can regard $\psi$ as a representation of $W_F \times \SL_2(\C) \times \SL_2(\C)$. 
We write
\[
\psi = \bigoplus_\rho\left(\bigoplus_{i \in I_\rho} \rho \boxtimes S_{a_i} \boxtimes S_{b_i}\right), 
\]
where $\rho$ runs over $\sqcup_{d \geq 1}\Cusp_\unit(\GL_d(F))$.
\par

For $\psi$ as above, 
we say that 
$\psi$ is \emph{of good parity} 
if $\rho \boxtimes S_{a_i} \boxtimes S_{b_i}$ is self-dual of the same type as $\psi$ 
for any $\rho$ and $i \in I_\rho$, 
i.e., 
\begin{itemize}
\item
$\rho \in \Cusp^\bot(\GL_d(F))$ is orthogonal and $a_i+b_i \equiv 0 \bmod 2$ 
if $G_n = \Sp_{2n}(F)$
(\resp $a_i+b_i \equiv 1 \bmod 2$ if $G_n = \SO_{2n+1}(F)$); or 
\item
$\rho \in \Cusp^\bot(\GL_d(F))$ is symplectic and $a_i+b_i \equiv 1 \bmod 2$ 
if $G_n = \Sp_{2n}(F)$
(\resp $a_i+b_i \equiv 0 \bmod 2$ if $G_n = \SO_{2n+1}(F)$).
\end{itemize}
Let $\Psi(G_n)$ be the set of $A$-parameters. 
The subset of $\Psi(G_n)$ consisting of $A$-parameters of good parity is denoted by $\Psi_\gp(G_n)$. 
Also, we denote by $\Phi_\temp(G_n)$ the subset of $\Psi(G_n)$ consisting of
\emph{tempered} $A$-parameters, i.e., 
$A$-parameters $\phi$ which are trivial on the second $\SL_2(\C)$. 
Finally, we set $\Phi_\gp(G_n) = \Psi_\gp(G_n) \cap \Phi_\temp(G_n)$.
\par

%\subsection{$A$-packets}\label{Apacket}
\subsection{$A$-packets}\label{Apacket}
To an $A$-parameter $\psi \in \Psi(G_n)$, 
Arthur \cite[Theorem 1.5.1 (a)]{Ar} associated an \emph{$A$-packet} $\Pi_\psi$, 
which is a finite multi-set over $\Irr_\unit(G_n)$.
In fact, M{\oe}glin \cite{Moe11c} showed that $\Pi_\psi$ is multiplicity-free, 
i.e., a subset of $\Irr_\unit(G_n)$. 
We say that $\pi \in \Irr(G_n)$ is \emph{of Arthur type} if $\pi \in \Pi_\psi$ for some $\psi \in \Psi(G_n)$. 
\par

Recall that for a representation $\psi_1$ of $W_F \times \SL_2(\C) \times \SL_2(\C)$, 
we have an irreducible unitary representation $\tau_{\psi_1}$ 
of $\GL_m(F)$ with $m = \dim(\psi)$,
which is a product of unitary Speh representations. 

\begin{prop}[M{\oe}glin ({\cite[Theorem 6]{Moe06a}, \cite[Proposition 8.11]{X2}})]\label{bad}
Any $\psi \in \Psi(G_n)$ can be decomposed as 
\[
\psi = \psi_1 \oplus \psi_0 \oplus \psi_1^\vee, 
\]
where 
\begin{itemize}
\item
$\psi_0 \in \Psi_\gp(G_{n_0})$; 
\item
$\psi_1$ is a direct sum of irreducible representations of $W_F \times \SL_2(\C) \times \SL_2(\C)$
which are not self-dual of the same type as $\psi$. 
\end{itemize}
For $\pi_0 \in \Pi_{\psi_0}$, 
the parabolically induced representation $\tau_{\psi_1} \rtimes \pi_0$
is irreducible and independent of the choice of $\psi_1$. 
Moreover, 
\[
\Pi_\psi = \left\{ \tau_{\psi_1} \rtimes \pi_0 \;\middle|\; \pi_0 \in \Pi_{\psi_0} \right\}.
\]
\end{prop}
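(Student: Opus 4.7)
The plan is to build the decomposition by hand, then derive the irreducibility and the $A$-packet identity from standard parabolic induction results.

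For the decomposition, group the irreducible summands of $\psi$ into three classes: self-dual of the same type as $\psi$, self-dual of the opposite type, and not self-dual. The first class forms $\psi_0$. Each non-self-dual summand $\xi$ has $\xi^\vee$ appearing in $\psi$ with the same multiplicity (since $\psi$ itself is self-dual of a definite type), so I place all copies of $\xi$ into $\psi_1$ and all copies of $\xi^\vee$ into $\psi_1^\vee$. For each self-dual summand of the opposite type, the nondegeneracy of the bilinear form preserved by $\psi$ forces its multiplicity to be even, so half the copies go into $\psi_1$ and half into $\psi_1^\vee$. By construction $\psi_0 \in \Psi_\gp(G_{n_0})$ and $\psi_1$ has no summand self-dual of the same type as $\psi$.

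For the irreducibility of $\tau_{\psi_1} \rtimes \pi_0$, I would reduce to a single irreducible summand $\xi = \rho \boxtimes S_a \boxtimes S_b$ of $\psi_1$. All summands of the $A$-parameter of $\pi_0$ are self-dual of the same type as $\psi$, while $\xi$ is not; the reducibility points of $\tau_\xi \rtimes \pi_0$ on the cuspidal line through $\rho$ are controlled by the Jordan blocks of $\psi_0$ of the appropriate type, and the mismatch rules them all out. This is the standard reducibility criterion for $\GL$-type parabolic induction from a good-parity $A$-packet; iterating over the summands of $\psi_1$ gives the general case. Independence of the choice between $\psi_1$ and $\psi_1^\vee$ then follows from the M{\oe}glin-Vigneras-Waldspurger involution applied to the induction, which identifies $\tau_{\psi_1} \rtimes \pi_0$ with $\tau_{\psi_1^\vee} \rtimes \pi_0$ once one knows that $\Pi_{\psi_0}$ is stable under MVW.

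Finally, the set equality $\Pi_\psi = \{\tau_{\psi_1} \rtimes \pi_0 \mid \pi_0 \in \Pi_{\psi_0}\}$ should come from matching Arthur's endoscopic character identity for $\psi$ with its counterpart for $\psi_0$: the stable distribution on $G_n$ attached to $\Pi_\psi$ is parabolically induced from the stable distribution on $G_{n_0}$ attached to $\Pi_{\psi_0}$ along $\tau_{\psi_1}$. Combined with the irreducibility above and the multiplicity-freeness of $\Pi_\psi$ due to M{\oe}glin, this forces the displayed equality. The main obstacle is precisely this last step, since one must show that no spurious representations appear and that distinct $\pi_0$ produce distinct inductions; in practice this can also be carried out via M{\oe}glin's explicit construction, where $\Pi_\psi$ is built inductively over the Jordan blocks of $\psi$ and adjoining a bad-parity block amounts directly to parabolic induction by the associated Speh representation.
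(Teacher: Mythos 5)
The paper does not prove this Proposition; it is quoted verbatim from M{\oe}glin \cite{Moe06a} and Xu \cite{X2}, and there is no argument in the text for you to be compared against. So the only thing to assess is the internal coherence of your sketch, and whether it reflects what actually happens in those references.

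Your outline is a reasonable reconstruction of the literature's strategy, and the three ingredients you isolate (linear-algebraic decomposition of $\psi$, irreducibility of the induction via reducibility points, the character-identity or explicit-construction argument for the packet equality) are the right ones. Two places deserve more care. First, the ``independence of $\psi_1$'' is stronger than the single swap $\psi_1 \leftrightarrow \psi_1^\vee$: for each non-self-dual summand $\xi$ one may independently decide whether $\xi$ or $\xi^\vee$ goes into $\psi_1$, and for each bad-parity self-dual block the partition of its even multiplicity is likewise a choice. So the MVW/contragredient argument must be applied factor by factor, using the irreducibility already established, rather than once globally; invoking MVW on the whole $\psi_1$ at once only handles the extreme swap. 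Second, the final set equality is not a formal consequence of the endoscopic character identity plus multiplicity one alone: one also needs injectivity of $\pi_0 \mapsto \tau_{\psi_1}\rtimes \pi_0$ (distinct $\pi_0$ give distinct inductions) and that every member of $\Pi_\psi$ arises this way. In the cited sources this is obtained from M{\oe}glin's explicit inductive construction of the packets, exactly as you note at the end, so your fallback is the correct one; the character-identity route alone leaves a gap. With those two points tightened, your sketch matches the published proof in spirit.
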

\par

By \cite[Theorem 1.5.1 (b)]{Ar}, if $\phi \in \Phi_\temp(G_n)$ is a tempered $A$-parameter, 
then $\Pi_\phi$ is a subset of $\Irr_\temp(G_n)$ and
\[
\Irr_\temp(G_n) = \bigsqcup_{\phi \in \Phi_\temp(G_n)} \Pi_\phi
\quad \text{(disjoint union)}.
\]
Moreover, after fixing a Whittaker datum for $G_n$, 
the tempered $A$-packet $\Pi_\phi$ is parametrized by $\widehat{\Sc_\phi}$, 
which is the Pontryagin duals of the component group $\Sc_\phi$. 
When $\phi = \oplus_\rho(\oplus_{i \in I_\rho} \rho \boxtimes S_{a_i}) \in \Phi_\gp(G_n)$, 
an element $\ep \in \widehat{\Sc_\phi}$ is characterized by 
$\ep(\rho \boxtimes S_{a_i}) \in \{\pm1\}$ for $\rho$ and $i \in I_\rho$ such that 
\begin{itemize}
\item
if $a_i = a_j$, 
then $\ep(\rho \boxtimes S_{a_i}) = \ep(\rho \boxtimes S_{a_j})$; 
\item
$\prod_\rho \prod_{i \in I_\rho} \ep(\rho \boxtimes S_{a_i}) = 1$.
\end{itemize}
We denote the element of $\Pi_\phi$ corresponding to $\ep \in \widehat{\Sc_\phi}$ by $\pi(\phi, \ep)$.
\par

%\subsection{Extended cuspidal support}
\subsection{Extended cuspidal support}

One of key notions in this paper is as follows. 
\begin{defi}[M{\oe}glin]\label{def.esupp}
Let $\pi \in \Irr(G_n)$. 
Take $\rho_i \in \Cusp(\GL_{d_i}(F))$ for $1 \leq i \leq r$ and 
an irreducible cuspidal representation $\sigma$ of $G_{n_0}$ 
such that 
\[
\pi \hookrightarrow \rho_1 \times \dots \times \rho_r \rtimes \sigma.
\]
Write $\sigma = \pi(\phi_\sigma, \ep_\sigma)$ 
with 
\[
\phi_\sigma = \bigoplus_{j=1}^t \rho'_j \boxtimes S_{a_j} \in \Phi_\temp(G_{n_0}).
\]
Then we define an \emph{extended cuspidal support} $\esupp(\pi)$
as the multi-set 
\begin{align*}
\esupp(\pi) &= \{\rho_1,\dots,\rho_r, \rho_1^\vee, \dots, \rho_r^\vee\}
\\&\sqcup
\bigsqcup_{j=1}^t
\{\rho_j'|\cdot|^{\half{a_j-1}},\rho_j'|\cdot|^{\half{a_j-3}}, \dots, \rho_j'|\cdot|^{-\half{a_j-1}}\}
\end{align*}
over $\sqcup_{d \geq 1}\Cusp(\GL_{d}(F))$.
Here, $\rho^\vee$ denotes the contragredient of $\rho$.
\end{defi}

Let $\psi \in \Psi(G_n)$. 
Recall that $\psi$ is a homomorphism from $W_F \times \SL_2(\C) \times \SL_2(\C) \rightarrow \widehat{G}_n$.
Define the \emph{diagonal restriction} of $\psi$ by $\psi_d = \psi \circ \Delta$, 
where 
\[
\Delta \colon W_F \times \SL_2(\C) \rightarrow W_F \times \SL_2(\C) \times \SL_2(\C),\;
(w,\alpha) \mapsto (w,\alpha,\alpha). 
\]
The following is a key proposition. 

\begin{prop}[{\cite[4.1 Proposition]{Moe-comp}}]\label{ex.supp}
Let $\psi \in \Psi(G_n)$. 
Write $\psi_d = \oplus_{i=1}^r \rho_i \boxtimes S_{a_i}$. 
Then for any $\pi \in \Pi_\psi$, 
the extended cuspidal support of $\pi$ is given by 
\[
\esupp(\pi) = \bigsqcup_{i=1}^r
\{\rho_i|\cdot|^{\half{a_i-1}},\rho_i|\cdot|^{\half{a_i-3}}, \dots, \rho_i|\cdot|^{-\half{a_i-1}}\}.
\]
\end{prop}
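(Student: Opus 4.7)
The plan is to reduce the claim to the tempered good-parity case, where it follows by standard module theory, and then to extend it by induction on the ``depth'' of $\psi$ in the second $\SL_2(\C)$.

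First I would apply Proposition \ref{bad} to write $\psi = \psi_1 \oplus \psi_0 \oplus \psi_1^\vee$ with $\psi_0 \in \Psi_\gp(G_{n_0})$, so that every $\pi \in \Pi_\psi$ equals $\tau_{\psi_1} \rtimes \pi_0$ for some $\pi_0 \in \Pi_{\psi_0}$. Parabolic induction adds the (usual) cuspidal supports and preserves the cuspidal core $\sigma$ appearing in Definition \ref{def.esupp}, so
\[
\esupp(\tau_{\psi_1} \rtimes \pi_0) = \mathrm{cusp}(\tau_{\psi_1}) \sqcup \mathrm{cusp}(\tau_{\psi_1}^\vee) \sqcup \esupp(\pi_0).
\]
Under the local Langlands correspondence for general linear groups, the cuspidal support of the product of unitary Speh representations $\tau_{\psi_1}$ agrees with the $W_F$-cuspidal content of $\psi_{1,d}$, arranged into segments in exactly the form appearing in the statement. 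Combined with the decomposition $\psi_d = \psi_{1,d} \oplus \psi_{0,d} \oplus \psi_{1,d}^\vee$, this reduces the proposition to $\psi \in \Psi_\gp(G_n)$.

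Next, I would dispose of the tempered good-parity case $\phi = \bigoplus_j \rho'_j \boxtimes S_{a_j} \in \Phi_\gp(G_n)$, in which $\phi_d = \phi$. Each $\pi \in \Pi_\phi$ is a subrepresentation of an induced representation $\delta_1 \times \dots \times \delta_t \rtimes \sigma_0$, where the $\delta_k$ are discrete series with $L$-parameters $\rho'_j \boxtimes S_{a_j}$ (for those $j$ with $a_j > 1$) and $\sigma_0$ is a cuspidal element of a subpacket built from the remaining size-$1$ summands. Since each $\delta_k$ is the unique irreducible subrepresentation of its Steinberg-type standard module $\rho'_j|\cdot|^{\half{a_j-1}} \times \dots \times \rho'_j|\cdot|^{-\half{a_j-1}}$, chaining these embeddings gives an inclusion of $\pi$ into the long induced representation, and Definition \ref{def.esupp} then reads off precisely the claimed extended cuspidal support.

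For general $\psi \in \Psi_\gp(G_n)$, the plan is to induct on a natural complexity of $\psi$ in the second $\SL_2(\C)$ factor. The key ingredient is M{\oe}glin's explicit construction of $\Pi_\psi$ (which the paper recasts through extended multi-segments $\EE$), realizing any $\pi \in \Pi_\psi$ as a subrepresentation of $\rho|\cdot|^x \rtimes \pi'$ for some $\pi'$ in a packet $\Pi_{\psi'}$ of strictly smaller complexity, together with the combinatorial identity
\[
(\rho \boxtimes S_a \boxtimes S_b)_d \cong \bigoplus_{k=0}^{\min(a,b)-1} \rho \boxtimes S_{a+b-1-2k}
\]
for the diagonal restriction of an irreducible summand. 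The main obstacle is this inductive step: one must check that, at each reduction, the pair $\{\rho|\cdot|^x, \rho|\cdot|^{-x}\}$ added to $\esupp(\pi')$ to recover $\esupp(\pi)$ matches exactly the difference between $\psi_d$ and $\psi'_d$, which amounts to a careful combinatorial matching between M{\oe}glin's segment operations on $\psi$ and the formula for the diagonal restriction displayed above.
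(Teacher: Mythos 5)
The paper does not prove this proposition: it is quoted verbatim from M{\oe}glin \cite[4.1 Proposition]{Moe-comp}, so there is no internal argument against which to compare your sketch; it can only be judged on its own merits and against the expected proof strategy.

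Your reduction to the good-parity case via Proposition \ref{bad} is correct, and the identity $\esupp(\tau_{\psi_1} \rtimes \pi_0) = \mathrm{cusp}(\tau_{\psi_1}) \sqcup \mathrm{cusp}(\tau_{\psi_1}^\vee) \sqcup \esupp(\pi_0)$ is exactly the right observation, as is the Clebsch--Gordan identity for the diagonal restriction. However, there are two genuine gaps. In the tempered case, your description of $\sigma_0$ as a cuspidal ``built from the remaining size-$1$ summands'' is incorrect: a cuspidal representation of $G_n$ can have an $L$-parameter with summands $\rho'_j \boxtimes S_{a_j}$ of arbitrarily large $a_j$ (e.g.\ $\pi(0^-,1^+,2^-)$ in Section \ref{s.ex1}, which is supercuspidal with parameter $S_1 + S_3 + S_5$), so the Moeglin--Tadi\'c realization of a discrete series as a socle of an induced representation does not simply peel off Steinbergs $\Delta_{\rho'_j}[\half{a_j-1}, -\half{a_j-1}]$; the GL segments appearing there join consecutive Jordan blocks, and the verification that the cuspidal support of the GL part plus the segments from $\phi_{\sigma_0}$ reassembles into $\sqcup_j \{\rho'_j|\cdot|^{\half{a_j-1}}, \ldots, \rho'_j|\cdot|^{-\half{a_j-1}}\}$ is a nontrivial bookkeeping step you have not carried out.

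More seriously, your inductive step asserts that every $\pi \in \Pi_\psi$ with $\psi$ non-tempered of good parity embeds as a subrepresentation of $\rho|\cdot|^x \rtimes \pi'$ with $\pi'$ in a strictly smaller $A$-packet. This fails precisely for the base-case representations handled by Step $2$ of Algorithm \ref{main.alg} — those whose derivatives $D_{\rho|\cdot|^z}$ vanish at all $z \notin \{0, 1/2\}$ — and the associated parameters (with all summands $\rho \boxtimes S_a \boxtimes S_b$ satisfying $a - b \in \{0,1\}$) can still be genuinely non-tempered, e.g.\ containing $\rho \boxtimes S_2 \boxtimes S_2$. M{\oe}glin's actual construction instead realizes $\Pi_\psi$ by taking Jacquet modules from a dominant shift of $\psi$, where the members are socles of Steinberg-segment inductions $\Delta_\rho[A,B] \rtimes (\cdot)$ built up from a tempered representation; the $\esupp$ bookkeeping is carried through those segment operations, and that is exactly where the real work of the proof lies. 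Your sketch reduces this to ``a careful combinatorial matching'' without identifying the correct reduction, which leaves the core of the argument untouched.
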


As a consequence, 
M{\oe}glin showed that 
if $\Pi_{\psi_1} \cap \Psi_{\psi_2} \not= \emptyset$, then $\psi_{1,d} \cong \psi_{2,d}$ 
(\cite[4.2 Corollaire]{Moe-comp}).

%\subsection{Langlands classification}
\subsection{Langlands classification}
A \emph{segment} is a finite set 
consisting of cuspidal representations of $\GL_d(F)$
of the form
\[
[x,y]_\rho = \{\rho^{x}, \rho^{x-1}, \dots, \rho|\cdot|^y\},
\]
where $\rho \in \Cusp_\unit(\GL_d(F))$ and $x,y \in \R$ with $x-y \in \Z$ and $x \geq y$.
For a segment $[x,y]_\rho$ as above, 
we have a Steinberg representation $\Delta_\rho[x,y]$ of $\GL_{d(x-y+1)}(F)$, 
which is a unique irreducible subrepresentation of 
parabolically induced representation
\[
\rho|\cdot|^x \times \rho|\cdot|^{x-1} \times \dots \times \rho|\cdot|^y.
\]
\par

By the Langlands classification for $G_n$, 
any $\pi \in \Irr(G_n)$ is a unique irreducible subrepresentation 
of $\Delta_{\rho_1}[x_1,y_1] \times \dots \times \Delta_{\rho_r}[x_r,y_r] \rtimes \pi(\phi,\ep)$, 
where 
\begin{itemize}
\item
$\rho_1, \dots, \rho_r \in \Cusp_\unit(\GL_{d_i}(F))$; 
\item
$x_1+y_1 \leq \dots \leq x_r+y_r < 0$; 
\item
$\phi \in \Phi_\temp(G_{n_0})$ and $\ep \in \widehat{\Sc_\phi}$.
\end{itemize}
In this case, we write
\[
\pi = L(\Delta_{\rho_1}[x_1,y_1], \dots, \Delta_{\rho_r}[x_r,y_r]; \pi(\phi,\ep)),
\]
and call $(\Delta_{\rho_1}[x_1,y_1], \dots, \Delta_{\rho_r}[x_r,y_r]; \pi(\phi,\ep))$
the \emph{Langlands data} for $\pi$. 
\par

We say that 
an irreducible representation 
$\pi = L(\Delta_{\rho_1}[x_1,y_1], \dots, \Delta_{\rho_r}[x_r,y_r]; \pi(\phi,\ep))$
is \emph{of good parity}
if 
\begin{itemize}
\item
$x_1, \dots, x_r \in (1/2)\Z$;
\item
$\rho_i \boxtimes S_{2|x_i|+1}$ is self-dual representation of $W_F \times \SL_2(\C)$
of the same type as $\phi$ for $i = 1,\dots,r$; and
\item
$\phi \in \Phi_\gp(G_{n_0})$.
\end{itemize}
This definition is equivalent to the one in Section \ref{intro}.
We denote the set of equivalence classes of irreducible representations of $G_n$ of good parity
by $\Irr_\gp(G_n)$. 
Note that if $\psi \in \Psi_\gp(G_n)$, then $\Pi_\psi \subset \Irr_\gp(G_n)$.
Moreover, by Proposition \ref{bad} (together with \cite[Theorem 5.3]{At2}), 
Problem \ref{pro1} is reduced to the case where $\pi \in \Irr_\gp(G_n)$.

%\subsection{Derivatives}
\subsection{Derivatives}\label{s.der}
As in \cite{At}, we use the following notion. 

\begin{defi}
Fix $\rho \in \Cusp(\GL_d(F))$. 
Let $\pi$ be a smooth representation of $G_n$ of finite length. 

\begin{enumerate}
\item
The \emph{$k$-th $\rho$-derivative} of $\pi$ is 
a semisimple representation $D_{\rho}^{(k)}(\pi)$ satisfying
\[
[\Jac_{P_{dk}}(\pi)] = \rho^k \otimes D_{\rho}^{(k)}(\pi) + \sum_{i \in I }\tau_i \otimes \pi_i, 
\]
where 
\begin{itemize}
\item
$[\Jac_{P_{dk}}(\pi)]$ is the semisimplification of Jacquet module of $\pi$
along the standard parabolic subgroup $P_{dk}$ with Levi part 
$\GL_{dk}(F) \times G_{n-dk}$; 
\item
$\rho^k = \rho \times \dots \times \rho$ ($k$-times);
\item
$\tau_i \in \Irr(\GL_{dk}(F))$ such that $\tau_i \not\cong \rho^k$.
\end{itemize}

\item
For simplicity, we write $D_{\rho}(\pi) = D_{\rho}^{(1)}(\pi)$.

\item
When $D_{\rho}^{(k)}(\pi) \not= 0$ but $D_{\rho}^{(k+1)}(\pi) = 0$, 
we call $D_{\rho}^{(k)}(\pi)$ the \emph{highest $\rho$-derivative} of $\pi$.
\end{enumerate}
\end{defi}

In this paper, we use derivatives as the following forms. 
\begin{defi}\label{d.d}
Fix $\rho \in \Cusp_\unit(\GL_d(F))$. 
Let $\pi \in \Irr(G_n)$. 
Define 
\begin{align*}
\DD^+_\rho(\pi) &= \left[(x,k_0),(x+1,k_1),\dots,(x+t-1,k_{t-1}); \pi^+\right], \\
\DD^-_\rho(\pi) &= \left[(x,k_0),(x-1,k_1),\dots,(x-t+1,k_{t-1}); \pi^-\right]
\end{align*}
as follows. 

\begin{enumerate}
\item
If $D_{\rho|\cdot|^z}(\pi) = 0$ for any $z \in \R$, 
we set $\DD^+_\rho(\pi) = \DD^-_\rho(\pi) = [\pi]$ 
(so that $t=0$ and $\pi^\pm = \pi$). 

\item
Otherwise, for $\epsilon \in \{\pm\}$, set
\[
x = \left\{
\begin{aligned}
\max&\{z \in \R \;|\; D_{\rho|\cdot|^z}(\pi) \not= 0\} \iif \epsilon = +, \\
\min&\{z \in \R \;|\; D_{\rho|\cdot|^z}(\pi) \not= 0\} \iif \epsilon = -.
\end{aligned}
\right. 
\]
Define $t > 0$ and $(k_0,\dots,k_{t-1})$ so that 
\[
\pi^\epsilon \coloneqq
D_{\rho|\cdot|^{x+\epsilon (t-1)}}^{(k_{t-1})} \circ \dots \circ D_{\rho|\cdot|^x}^{(k_0)}(\pi)
\not= 0,
\]
but for $j = 0, \dots, t$, we have 
\[
D_{\rho|\cdot|^{x+\epsilon j}}^{(k_{j}+1)} \circ
\left( D_{\rho|\cdot|^{x+\epsilon (j-1)}}^{(k_{j-1})} 
\circ \dots \circ D_{\rho|\cdot|^x}^{(k_0)}\right)(\pi) = 0
\]
with $k_t \coloneqq 0$.
\end{enumerate}
\end{defi}

Recall that if $\rho \in \Cusp^\bot(\GL_d(F))$ and $x \in \R$ with $x \not= 0$, 
for $\pi \in \Irr(G_n)$, 
its highest $\rho|\cdot|^x$-derivative $D_{\rho|\cdot|^x}^{(k)}(\pi)$ is also irreducible. 
Moreover, if we know the Langlands data for $\pi$, 
one can compute the Langlands data for $D_{\rho|\cdot|^x}^{(k)}(\pi)$ 
(see \cite[Sections 6, 7]{AM}).
In particular, for $\epsilon \in \{\pm\}$, if 
$\DD^\epsilon_\rho(\pi) = \left[(x,k_0),\dots,(x+\epsilon(t-1),k_{t-1}); \pi^\epsilon\right]$ 
with $\epsilon x > 0$, 
we can compute the Langlands data for $\pi^\epsilon$.

%\section{Main results and Examples}
%\section{Main results and Examples}
\section{Main results and Examples}\label{s.main}
In this section, we state our main results and give some examples. 

%\subsection{Extended multi-segments}
\subsection{Extended multi-segments}
To describe $A$-packets $\Pi_\psi$ for $\psi \in \Psi_\gp(G_n)$, 
in \cite{At}, we introduced the following notions.

\begin{defi}\label{segments}
\begin{enumerate}
\item
An \emph{extended segment} is a triple $([A,B]_\rho, l, \eta)$,
where
\begin{itemize}
\item
$[A,B]_\rho = \{\rho|\cdot|^A, \dots, \rho|\cdot|^B \}$ is a segment; 
\item
$l \in \Z$ with $0 \leq l \leq \half{b}$, where $b \coloneqq \#[A,B]_\rho = A-B+1$; 
\item
$\eta \in \{\pm1\}$. 
\end{itemize}

\item
An \emph{extended multi-segment} for $G_n$ is a weak equivalence class of multi-sets of extended segments 
\[
\EE = \bigcup_{\rho}\{ ([A_i,B_i]_{\rho}, l_i, \eta_i) \}_{i \in (I_\rho,>)}
\]
such that 
\begin{itemize}
\item
$\rho$ runs over $\sqcup_{d \geq 1}\Cusp^\bot(\GL_d(F))$;
\item
$I_\rho$ is a totally ordered finite set with a fixed order $>$ satisfying 
\[
A_i < A_j,\; B_i < B_j \implies i < j,
\]
which is called an \emph{admissible order};

\item
$A_i + B_i \geq 0$ for all $\rho$ and $i \in I_\rho$; 

\item
as a representation of $W_F \times \SL_2(\C) \times \SL_2(\C)$, 
\[
\psi_\EE \coloneqq 
\bigoplus_\rho \bigoplus_{i \in I_\rho} \rho \boxtimes S_{a_i} \boxtimes S_{b_i}
\]
belongs to $\Psi_\gp(G_n)$, 
where $a_i \coloneqq A_i+B_i+1$ and $b_i \coloneqq A_i-B_i+1$; 

\item
a sign condition
\[
\prod_{\rho} \prod_{i \in I_\rho} (-1)^{[\half{b_i}]+l_i} \eta_i^{b_i} = 1
\]
holds. 
\end{itemize}

\item
Two extended segments $([A,B]_\rho, l, \eta)$ and $([A',B']_{\rho'}, l', \eta')$ are \emph{equivalent} 
if 
\begin{itemize}
\item
$[A,B]_\rho = [A',B']_{\rho'}$; 
\item
$l = l'$; and
\item
$\eta = \eta'$ whenever $l = l' < \half{b}$. 
\end{itemize}
Similarly, $\EE = \cup_{\rho}\{ ([A_i,B_i]_{\rho}, l_i, \eta_i) \}_{i \in (I_\rho,>)}$ 
and $\EE' = \cup_{\rho}\{ ([A'_i,B'_i]_{\rho}, l'_i, \eta'_i) \}_{i \in (I_\rho,>)}$ are \emph{weak equivalent}
if $([A_i,B_i]_\rho, l_i, \eta_i)$ and $([A'_i,B'_i]_{\rho}, l'_i, \eta'_i)$ are equivalent for all $\rho$ and $i \in I_\rho$.

\item
An admissible order $>$ on $I_\rho$ is called \emph{very admissible} 
if it satisfies a stronger condition 
\[
B_i < B_j \implies i < j. 
\]
We say that \emph{$\EE$ has very admissible orders}
if the admissible order $>$ on $I_\rho$ is very admissible for any $\rho$.
\end{enumerate}
\end{defi}

In \cite{At}, to an extended multi-segment $\EE$ for $G_n$, 
we associate a representation $\pi(\EE)$ of $G_n$. 
It is irreducible or zero.
The following properties were proven in \cite[Theorems 1.2, 1.3, 1.4]{At}: 
\begin{thm}
\begin{enumerate}
\item
For $\psi = \oplus_{\rho}(\oplus_{i \in I_\rho} \rho \boxtimes S_{a_i} \boxtimes S_{b_i}) \in \Psi_\gp(G_n)$, 
after fixing a very admissible order $>$ on $I_\rho$ for each $\rho$, 
we have 
\[
\Pi_\psi = \{\pi(\EE) \;|\; \psi_\EE \cong \psi\} \setminus \{0\}. 
\]

\item
There exists a non-vanishing criterion for $\pi(\EE)$. 

\end{enumerate}
\end{thm}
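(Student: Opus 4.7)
The plan is to establish (1) by matching the extended multi-segment data against M\oe glin's original explicit parametrization of $\Pi_\psi$, and then to derive (2) by translating her known non-vanishing criteria into the combinatorial language of Definition \ref{segments}. Throughout, I would treat the construction $\EE \mapsto \pi(\EE)$ (which the excerpt only names) as an explicit chain of parabolic inductions and partial Jacquet-type functors applied to a cuspidal seed determined by $\psi$, and argue stepwise. First, for each irreducible summand $\rho \boxtimes S_a \boxtimes S_b$ of $\psi$, I would read off the pair $(A,B)$ with $A = (a+b)/2-1$, $B = (a-b)/2$, so that $[A,B]_\rho$ and the multiplicity data faithfully encode $\psi$. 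M\oe glin's parametrization attaches to each such summand a pair $(l,\eta)$ with $0 \leq l \leq b/2$ and $\eta \in \{\pm 1\}$, subject to a global sign condition; this matches Definition \ref{segments} exactly, including the redundancy of $\eta$ when $l = b/2$, which is already built into the equivalence relation on extended segments.

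Next, I would verify that the map $\EE \mapsto \pi(\EE)$ descends to equivalence classes when every order on $I_\rho$ is very admissible. For this I would check that any two very admissible orders on the same underlying multi-set differ by a finite sequence of swaps of consecutive indices $i,j$ forced to satisfy $A_i = A_j$ and $B_i = B_j$, and that the corresponding elementary steps in the construction commute at the level of Jacquet modules of Speh-type building blocks. With well-definedness in hand, one direction of (1), namely $\{\pi(\EE) \mid \psi_\EE \cong \psi\} \setminus \{0\} \subset \Pi_\psi$, follows by induction on the construction: each step preserves membership in the $A$-packet of the truncated parameter under M\oe glin's description, and the global sign condition on $\EE$ matches the character sum condition on $\widehat{\Sc_\psi}$. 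The reverse inclusion then follows from a counting argument, comparing the number of equivalence classes of $\EE$ with $\psi_\EE \cong \psi$ to the size of the image of M\oe glin's parametrization.

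For (2), I would translate M\oe glin's vanishing conditions directly into inequalities among the $A_i,B_i,l_i,\eta_i$ when the segments $[A_i,B_i]_\rho$ and $[A_j,B_j]_\rho$ overlap or are nested (for the same $\rho$); in the non-overlapping case, vanishing does not occur. The key geometric content is that $\pi(\EE)$ vanishes precisely when one of the partial Jacquet operations that defines it is forced to kill the intermediate representation, and this can be detected locally on pairs of extended segments. Stating the criterion case by case according to the relative position of $[A_i,B_i]_\rho$ and $[A_j,B_j]_\rho$ and the relative sizes of $l_i, l_j, \eta_i\eta_j$, I would prove each case by inspecting the construction at the corresponding step, using \cite[Sections 6, 7]{AM} as the computational backbone for Jacquet modules of Steinberg-induced representations.

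The main obstacle is the well-definedness step: understanding precisely which reorderings of $I_\rho$ leave $\pi(\EE)$ unchanged, and in particular ruling out that two very admissible orders on the same underlying multi-set could yield one zero and one nonzero representation. The elementary steps in the construction do not in general commute, and the commutation that does hold under the very admissibility hypothesis must be extracted carefully from the explicit formulas for Jacquet modules of Speh representations. A secondary technical point is to check that the global sign condition in Definition \ref{segments} and the sign condition characterizing $\widehat{\Sc_\phi}$ in Section \ref{Apacket} match under the translation, since a mismatch by a single overall sign would make the main bijection fail.
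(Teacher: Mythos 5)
This statement is not proved in the present paper at all: the sentence immediately preceding it says the properties ``were proven in \cite[Theorems 1.2, 1.3, 1.4]{At}.'' So there is no ``paper's own proof'' to compare against; the theorem is imported wholesale from the earlier work, and the present paper builds on it as a black box. Your proposal, by contrast, is a sketch of how one might establish the result from scratch by matching Definition \ref{segments} against M{\oe}glin's parametrization, which is broadly the strategy carried out in \cite{At} (and, earlier, in Xu's \cite{X2,X3}).

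Within your sketch there is one concrete error that would derail the well-definedness step. You assert that two very admissible orders on the same underlying multi-set of segments ``differ by a finite sequence of swaps of consecutive indices $i,j$ forced to satisfy $A_i = A_j$ and $B_i = B_j$,'' and then argue that the construction is invariant because the corresponding elementary steps commute. This is false: very admissibility only requires $B_i < B_j \implies i < j$, so two very admissible orders can disagree on a pair with $B_i = B_j$ but $A_i \neq A_j$ (equivalently, nested segments $[A_i,B_i]_\rho \subset [A_j,B_j]_\rho$). For such a pair the map $\EE \mapsto \pi(\EE)$ is not invariant under the bare swap; rather, one must simultaneously modify $(l_i,\eta_i,l_j,\eta_j)$ according to Xu's change-of-order formula, which is precisely the operation (C) of Definition \ref{CUIP}. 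So the ``commutation'' you appeal to does not hold, and without the nontrivial formula from \cite[Theorem 1.3]{X3} the independence of the chosen very admissible order is not established. A secondary weakness is that the reverse inclusion $\Pi_\psi \subset \{\pi(\EE)\} \setminus \{0\}$ cannot be settled by a counting argument as stated, because you have not yet shown how many nonzero $\pi(\EE)$ there are; this is exactly what the non-vanishing criterion of part (2) is needed for, so the logical dependence between (1) and (2) would have to be arranged more carefully.
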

\par

%\subsection{Specifying of Arthur type representations}
\subsection{Specifying of Arthur type representations}
Now we give an algorithm to determine 
whether given $\pi \in \Irr_\gp(G_n)$ is of Arthur type or not. 
The following is the first main result. 

\begin{alg}\label{main.alg}
Let $\pi \in \Irr_\gp(G_n)$.
Assume that the Langlands data for $\pi$ is given. 

\begin{description}
\item[Step $1^+$]
Suppose that 
there exist $\rho \in \Cusp^\bot(\GL_d(F))$ and $z \geq 1$ such that $D_{\rho|\cdot|^z}(\pi) \not= 0$. 
Write 
\[
\DD^+_\rho(\pi) = \left[(x,k_0),(x+1,k_1),\dots,(x+t-1,k_{t-1}); \pi^+ \right]
\]
with $\pi^+ \in \Irr_\gp(G_{n_+})$.
Note that $t > 0$ and $x \geq 1$. 
Then $\pi$ is of Arthur type if and only if 
there exists an extended multi-segment $\EE^+$ for $G_{n_+}$ such that 
\begin{itemize}
\item
$\pi^+ \cong \pi(\EE^+)$; 
\item
$\max\{B \;|\; ([A,B]_\rho,l,\eta) \in \EE^+\} = x-1$; 
\item
$\EE^+$ contains extended segments of the form 
$([x+j-2,x-1]_\rho,*,*)$ with at least $k_{j-1}-k_{j}$ times for $1 \leq j \leq t$ with $k_t \coloneqq 0$.
\end{itemize}
In this case, 
let $\EE$ be given from $\EE^+$ 
by replacing $([x+j-2,x-1]_\rho,l_j,\eta_j)$ with $([x+j-1,x]_\rho,l_j,\eta_j)$
exactly $k_{j-1}-k_{j}$ times for $1 \leq j \leq t$.
Then $\pi \cong \pi(\EE)$. 

\item[Step $1^-$]
Suppose that 
there exist $\rho \in \Cusp^\bot(\GL_d(F))$ and $z < 0$ such that $D_{\rho|\cdot|^z}(\pi) \not= 0$. 
Write 
\[
\DD^-_\rho(\pi) = \left[(x,k_0),(x-1,k_1),\dots,(x-t+1,k_{t-1}); \pi^- \right]
\]
with $\pi^- \in \Irr_\gp(G_{n_-})$.
Note that $t > 0$ and $x < 0$. 
Then $\pi$ is of Arthur type if and only if 
there exists an extended multi-segment $\EE^-$ for $G_{n_-}$ such that 
\begin{itemize}
\item
$\pi^- \cong \pi(\EE^-)$; 
\item
$\min\{B \;|\; ([A,B]_\rho,l,\eta) \in \EE^-\} = x+1$; 
\item
$\EE^-$ contains extended segments of the form 
$([-x+j-2,x+1]_\rho,*,*)$ with at least $k_{j-1}-k_{j}$ times for $1 \leq j \leq t$ with $k_t \coloneqq 0$.
Here, when $x = -1/2$, we omit this condition for $j = 1$ since $[-1/2,1/2]_\rho = \emptyset$. 
\end{itemize}
In this case, 
let $\EE$ be given from $\EE^-$ by 
replacing $([-x+j-2,x+1]_\rho,l_j,\eta_j)$ with $([-x+j-1,x]_\rho,l_j+1,\eta_j)$
exactly $k_{j-1}-k_{j}$ times for $1 \leq j \leq t$. 
Here, when $x=-1/2$, we understand this operation for $j=1$
as adding $([1/2,-1/2]_\rho,1,+1)$ exactly $k_0-k_1$ times 
whose indices are less than any element in $I_\rho$. 
Then $\pi \cong \pi(\EE)$.

\item[Step $2$]
Otherwise, i.e., 
suppose that for $\rho \in \Cusp^\bot(\GL_d(F))$ and $z \in \R$, 
\[
D_{\rho|\cdot|^z}(\pi) \not= 0 \implies z \in \{0,1/2\}.
\]
Write 
\[
\pi = L(\Delta_{\rho_1}[x_1,y_1], \dots, \Delta_{\rho_r}[x_r,y_r]; \pi(\phi,\ep))
\]
as in the Langlands classification. 
For $\rho \in \Cusp^\bot(\GL_d(F))$ and $z \in (1/2)\Z$ with $z \geq 0$, set 
\begin{align*}
k_{\rho,z} 
&\coloneqq 
\#\{i \in \{1,\dots,r\} \;|\; \text{$\rho_i \cong \rho$, and, $x_i = z$ or $y_i  =-z$}\}
+m_\phi(\rho \boxtimes S_{2z+1}), 
\end{align*}
where $m_\phi(\rho \boxtimes S_{a})$ is the multiplicity of $\rho \boxtimes S_a$ in $\phi$.
Then unless 
$k_{\rho,z} \geq k_{\rho, z+1}$ for any $\rho$ and $z \geq 0$, 
then $\pi$ is not of Arthur type. 
In this case, set 
\[
\psi = \bigoplus_\rho \bigoplus_{z \in (1/2)\Z_{\geq 0}} 
\left(\rho \boxtimes S_{z+1+\delta_z} \boxtimes S_{z+1-\delta_z}\right)^{\oplus (k_{\rho,z}-k_{\rho,z+1})}
\]
with $\delta_z \in \{0,1/2\}$ such that $z+\delta_z \in \Z$.
Then $\pi$ is of Arthur type if and only if $\pi \in \Pi_\psi$.
\end{description}
\end{alg}

The claims in this algorithm will be proven in Section \ref{proof.alg} below. 
To apply this algorithm, in Step $1^\pm$, 
we need to know 
all extended multi-segments $\EE^\pm$ such that $\pi^\pm \cong \pi(\EE^\pm)$. 
It is done by the result in the next subsection.
An example of Algorithm \ref{main.alg} is given in Section \ref{s.ex2}.

%\subsection{Strongly equivalence classes}
\subsection{Strongly equivalence classes}
Recall that 
for $\psi,\psi' \in \Psi_\gp(G_n)$, 
even if $\psi \not\cong \psi'$, 
the $A$-packets $\Pi_\psi$ and $\Pi_{\psi'}$ can have an intersection.
In other words, 
for two extended multi-segments $\EE$ and $\EE'$ for $G_n$, 
even if $\EE \not= \EE'$, one might have $\pi(\EE) \cong \pi(\EE') \not= 0$. 
In this subsection, we determine this situation.

\begin{defi}\label{CUIP}
Let $\EE_1$ and $\EE_2$ be two extended multi-segments for $G_n$. 
We say that $\EE_1$ and $\EE_2$ are \emph{strongly equivalent}
if $\EE_2$ can be obtained from $\EE_1$ by a finite chain of the following three operations and their inverses: 
Write
\[
\EE = \bigcup_{\rho}\{ ([A_i,B_i]_{\rho}, l_i, \eta_i) \}_{i \in (I_\rho,>)}
\]
and let $i<j$ be adjacent elements in $I_\rho$.

\begin{description}
\item[Changing admissible orders (C)]
Suppose that $[A_i,B_i]_\rho \subset [A_j,B_j]_\rho$. 
Let $>'$ be another admissible order on $I_\rho$ given from $>$ by changing $i >' j$. 
Then we define $\EE \mapsto \EE'$ by replacing $(I_\rho,>)$ with $(I_\rho,>')$
and 
\[
\{([A_i,B_i]_{\rho}, l_i, \eta_i), ([A_j,B_j]_{\rho}, l_j, \eta_j)\}
\]
with 
\[
\{([A_i,B_i]_{\rho}, l'_i, \eta'_i), ([A_j,B_j]_{\rho}, l'_j, \eta'_j)\}
\]
where $l_i', \eta_i', l_j', \eta_j'$ are given explicitly in \cite[Theorem 1.3]{X3} (see also \cite[Section 4.2]{At}).

\item[Union-Intersection (UI)]
Suppose that $B_i < B_j$, $A_i < A_j$ and 
that one of conditions (1)--(3) in \cite[Section 5.2]{At} holds. 
Then we define $\EE \mapsto \EE'$ by replacing 
\[
\{([A_i,B_i]_{\rho}, l_i, \eta_i), ([A_j,B_j]_{\rho}, l_j, \eta_j)\}
\]
with 
\[
\{([A_j,B_i]_{\rho}, l'_i, \eta'_i), ([A_i,B_j]_{\rho}, l'_j, \eta'_j)\}
\]
where $l_i', \eta_i', l_j', \eta_j'$ are given explicitly in \cite[Theorem 5.2]{At}.
Note that $[A_j,B_i]_{\rho} = [A_i,B_i]_{\rho} \cup [A_j,B_j]_{\rho}$
and $[A_i,B_j]_{\rho} = [A_i,B_i]_{\rho} \cap [A_j,B_j]_{\rho}$.
Here, when $B_j = A_i+1$ so that $[A_i,B_j]_{\rho} = \emptyset$, 
we remove $([A_j,B_i]_{\rho}, l'_j, \eta'_j)$.

\item[Phantom (dis)appearing (P)]
Formally add $([l-1,-l]_\rho, l, +1)$ for an integer $l > 0$
or $([l-1/2,-l-1/2]_\rho,l,+1)$ for an integer $l \geq 0$
to $\EE$, whose index $i_0$ is the minimum in $I_\rho \cup \{i_0\}$. 
\end{description}
\end{defi}

The second main result is as follows.
\begin{thm}\label{thm:s-eq}
Let $\EE_1$ and $\EE_2$ be two extended multi-segments for $G_n$. 
Suppose that $\pi(\EE_1) \not= 0$. 
Then 
$\pi(\EE_1) \cong \pi(\EE_2)$ if and only if $\EE_1$ and $\EE_2$ are strongly equivalent.
\end{thm}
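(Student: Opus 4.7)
The theorem has two directions. For the \emph{if} direction, I must verify that each of (C), (UI), (P) preserves $\pi(\EE)$. Invariance under (C) follows from \cite[Theorem 1.3]{X3}, which gives the explicit transformation of labels $(l_i,\eta_i)$ when adjacent indices in an admissible order are swapped, as recalled in \cite[Section 4.2]{At}. Invariance under (UI) is precisely \cite[Theorem 5.2]{At}. For (P), observe that the phantom $([l-1,-l]_\rho,l,+1)$ (and its half-integer analogue) formally corresponds to a summand $\rho \boxtimes S_0 \boxtimes S_{2l}$ of $\psi_\EE$, i.e.\ a zero-dimensional piece, which is not a genuine constituent. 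I would verify directly from the construction of $\pi(\EE)$ via iterated Jacquet modules and parabolic induction that adjoining such a segment with its specific labels produces no new contribution, hence leaves $\pi(\EE)$ unchanged.

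The \emph{only if} direction proceeds by induction on $n$. Assume $\pi \coloneqq \pi(\EE_1) \cong \pi(\EE_2) \neq 0$. Suppose first that there exist $\rho \in \Cusp^\bot(\GL_d(F))$ and $x$ with $x \geq 1$ (or symmetrically $x < 0$) such that $D_{\rho|\cdot|^{x}}(\pi) \neq 0$. By the compatibility between derivatives and extended multi-segments developed in \cite[Section 5]{At}, each $\EE_i$ gives rise (after a preliminary normalization of the admissible order via (C)) to an extended multi-segment $\EE_i^{+}$ for the smaller group $G_{n^{+}}$ with $\pi(\EE_i^{+}) \cong \pi^{+}$, from which $\EE_i$ is recovered by the explicit procedure of Step $1^{+}$ of Algorithm \ref{main.alg}. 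By induction, $\EE_1^{+}$ and $\EE_2^{+}$ are strongly equivalent. The remaining task is to lift the given chain of (C), (UI), (P) between $\EE_1^{+}$ and $\EE_2^{+}$ to a chain between $\EE_1$ and $\EE_2$, operation by operation.

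The remaining case is $D_{\rho|\cdot|^{z}}(\pi) = 0$ for every $z \notin \{0,1/2\}$. By Proposition \ref{ex.supp} and Step 2 of Algorithm \ref{main.alg}, the extended cuspidal support $\esupp(\pi)$ distinguishes a specific minimal $A$-parameter $\psi_{\min} \in \Psi_\gp(G_n)$ with $\pi \in \Pi_{\psi_{\min}}$. For any $\EE$ with $\pi(\EE) \cong \pi$, the constraint $\psi_{\EE,d} \cong \psi_{\min,d}$ forced by Proposition \ref{ex.supp}, combined with a case analysis of the possible $\psi_\EE$'s sharing this diagonal restriction, shows that finitely many applications of (UI) and (P) transform $\EE$ into some $\EE'$ with $\psi_{\EE'} \cong \psi_{\min}$. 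Within the single $A$-packet $\Pi_{\psi_{\min}}$, the parametrization recalled in Section \ref{Apacket} together with \cite[Theorem 1.3]{X3} shows that two extended multi-segments with identical $\psi_\EE$ and identical image under $\pi(\cdot)$ are related only by a chain of (C)-moves. Applied to $\EE_1$ and $\EE_2$, this yields their strong equivalence.

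The principal obstacle is the lifting argument in Case A: one must trace through the combinatorial description of $\DD^{\pm}_\rho$ on extended multi-segments in \cite[Section 5]{At} and check, separately for (C), (UI), and (P), that every elementary move on $\EE_i^{\pm}$ can be realized by a finite sequence of elementary moves on $\EE_i$, possibly with phantom insertions and removals as intermediate steps. A secondary but non-trivial difficulty lies in the minimization step of Case B, which requires a classification of $A$-parameters with a prescribed diagonal restriction whose packets contain $\pi$, together with the verification that (UI) and (P) suffice to move among them.
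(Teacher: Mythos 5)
Your treatment of the \emph{if} direction agrees with the paper, which cites \cite[Theorem 1.3]{X3}, \cite[Theorem 5.2]{At}, and the definition of $\pi(\EE)$ in \cite[Section 3.2]{At} for (C), (UI), (P) respectively. The \emph{only if} direction is where your plan diverges from the paper, and where the real gap lies.

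Your inductive step (Case A) proposes to use both positive and negative highest derivatives $\DD^{\pm}_\rho$ and then to ``lift'' a chain of moves between $\EE_1^{\pm}$ and $\EE_2^{\pm}$ back up to $\EE_1,\EE_2$. You flag the lift as ``the principal obstacle,'' and it genuinely is unresolved: for $\DD^{-}_\rho$, the normalization in Theorem \ref{thm5}(2) already requires (C), (UI) \emph{and} (P), and after normalization the extracted segments are not removed but \emph{replaced} by $([-x+j-2,x+1]_\rho, l_j-1, \eta_j)$. It is not clear, and you do not show, that an arbitrary chain of moves at the level of $\EE_i^{-}$ (which may insert or delete phantoms with indices below everything) can be tracked through this replacement. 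Likewise, in Case B you assert that ``finitely many applications of (UI) and (P) transform $\EE$ into some $\EE'$ with $\psi_{\EE'}\cong\psi_{\min}$'' and that ``a case analysis'' plus ``verification that (UI) and (P) suffice'' closes the argument; this is exactly the content to be proved, not a step in a proof.

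The paper avoids the negative-derivative lift entirely by a shift trick you do not use. It first proves the theorem under the hypothesis that $\EE_1,\EE_2$ are \emph{non-negative} (all $B\ge 0$), in which situation $\DD^{-}$ never enters: one either normalizes everything to $B=0$ and reads off $\psi_{\EE_1}\cong\psi_{\EE_2}$ from the extended cuspidal support (so $\EE_1=\EE_2$), or one takes a highest \emph{positive} derivative, normalizes both with Theorem \ref{thm5}(1) using only (C) and (UI), \emph{removes} the identical top segments from both (this is removal, not replacement, so the induction lifts trivially), and applies the inductive hypothesis. Phantoms never appear in this non-negative case. For general $\EE_1,\EE_2$, the paper shifts both by $z\gg 0$ to make them non-negative, compares the two Langlands data of $\pi(\EE_{i,z})$ and observes (crucially using the constraint $B+l\ge -1/2$) that the discrepancy consists exactly of phantom-type segments. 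It then inserts the needed phantoms via (P) to equalize them, applies the non-negative case, and shifts back. This is the step that makes (P) tractable, and it is missing from your proposal. Without something playing the same role, Case A with $\DD^{-}$ remains open, and Case B rests on an unproven classification claim.
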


This theorem is proven in Section \ref{proof.eq} below. 
By this theorem, if $\pi \in \Irr_\gp(G_n)$ is known to be of Arthur type, 
then one can list all $\psi \in \Psi_\gp(G_n)$ such that $\pi \in \Pi_\psi$. 

%\subsection{Example of Theorem \ref{thm:s-eq}}\label{s.ex1}
\subsection{Example of Theorem \ref{thm:s-eq}}\label{s.ex1}
In this and the next subsections, 
we set $\rho = \1_{\GL_1(F)}$ and drop $\rho$ from the notation.
Moreover, we only consider $A$-parameters of the form $\psi = \oplus_{i \in I} S_{a_i} \boxtimes S_{b_i}$
and extended multi-segments $\EE = \{ ([A_i,B_i], l_i, \eta_i) \}_{i \in (I,>)}$. 
When $\phi = S_{2x_1+1} + \dots + S_{2x_r+1}$ and 
$\ep(S_{2x_i+1}) = \ep_i \in \{\pm1\}$, 
we write $\pi(\phi,\ep) = \pi(x_1^{\ep_1},\dots,x_r^{\ep_r})$.
\par

As in \cite[Section 3.1]{At}, we regard $\EE$ as the following symbol.
When $\EE = \{([A,B],l,\eta)\}$ is a singleton, we write
\[
\EE = 
\left(
\begin{array}{rcl}
\underbrace{\overset{B}{\lhd} \lhd \cdots \overset{B+l-1}{\lhd}}_l 
&
\overset{B+l}{\odot} \odot \cdots \odot \overset{A-l} \odot 
&
\underbrace{\overset{A-l+1}{\rhd} \cdots \rhd \overset{A}{\rhd}}_l
\end{array}
\right),
\] 
where $\odot$ is replaced with $\oplus$ and $\ominus$ alternately, 
starting with $\oplus$ if $\eta = +1$ (\resp $\ominus$ if $\eta = -1$).
In general, we put each symbol vertically. 
\par

Now we consider $\pi = \pi(0^-,1^+,2^-) \in \Irr(\Sp_8(F))$. 
It is known that $\pi$ is supercuspidal. 
Let us construct all $A$-parameters $\psi \in \Psi_\gp(\Sp_8(F))$ such that $\pi \in \Pi_\psi$.

\begin{enumerate}
\item
Define 
\[
\EE_1 = 
\bordermatrix{
& 0 & 1 & 2 \cr
& \ominus && \cr
& & \oplus & \cr
& & & \ominus
}.
\]
Clearly, $\pi(\EE_1) = \pi$. 
The associated $A$-parameter is $\psi_1 = \psi_{\EE_1} = S_1 + S_3 + S_5 \in \Phi_\gp(\Sp_8(F))$. 

\item
By using (UI) for the first and second lines of $\EE_1$, we obtain 
\[
\EE_2 = 
\bordermatrix{
& 0 & 1 & 2 \cr
& \ominus & \oplus & \cr
& & & \ominus
}.
\]
The associated $A$-parameter is $\psi_2 = \psi_{\EE_2} = S_2 \boxtimes S_2 + S_5$.

\item
After adding $([0,-1],1,1)$ to $\EE_2$ by (P), 
we use (UI). 
Then we obtain 
\[
\EE_3 = 
\bordermatrix{
& -1 & 0 & 1 & 2 \cr
& \lhd & \ominus & \rhd & \cr
&& \oplus && \cr
&& & & \ominus
}.
\]
The associated $A$-parameter is $\psi_3 = \psi_{\EE_3} = S_1 \boxtimes S_3 + S_1 + S_5$.

\item
By using (UI) for the second and third lines of $\EE_1$, we obtain 
\[
\EE_4 = 
\bordermatrix{
& 0 & 1 & 2 \cr
& \ominus && \cr
& & \oplus & \ominus
}.
\]
The associated $A$-parameter is $\psi_4 = \psi_{\EE_4} = S_1 + S_4 \boxtimes S_2$.

\item
By using (UI) for $\EE_2$ or $\EE_4$, 
we obtain 
\[
\EE_5 = 
\bordermatrix{
& 0 & 1 & 2 \cr
& \ominus & \oplus & \ominus
}.
\]
The associated $A$-parameter is $\psi_5 = \psi_{\EE_5} = S_3 \boxtimes S_3$.

\item
After adding $([0,-1],1,1)$ to $\EE_5$ by (P), 
we use (UI). 
Then we obtain 
\[
\EE_6 = 
\bordermatrix{
& -1 & 0 & 1 & 2 \cr
& \lhd & \ominus & \oplus & \rhd \cr
& & \ominus &&
}.
\]
The associated $A$-parameter is $\psi_6 = \psi_{\EE_6} = S_2 \boxtimes S_4 + S_1$.

\item[(8)]
After adding $([1,-2],2,1)$ to $\EE_5$ by (P), 
we use (UI). 
Then we obtain 
\[
\EE_8 = 
\bordermatrix{
& -2 & -1 & 0 & 1 & 2 \cr
& \lhd & \lhd & \ominus & \rhd & \rhd \cr
& & & \oplus & \ominus &
}.
\]
The associated $A$-parameter is $\psi_8 = \psi_{\EE_8} = S_1 \boxtimes S_5 + S_2 \boxtimes S_2$.

\item[(7)]
By using the inverse of (UI) for the second line of $\EE_8$, we obtain 
\[
\EE_7 = 
\bordermatrix{
& -2 & -1 & 0 & 1 & 2 \cr
& \lhd & \lhd & \ominus & \rhd & \rhd \cr
& & & \oplus & & \cr
& & & & \ominus &
}.
\]
The associated $A$-parameter is $\psi_7 = \psi_{\EE_7} = S_1 \boxtimes S_5 + S_1 + S_3$.

\item[(9)]
After adding $([1,-2],2,1)$ to $\EE_6$ by (P), 
we use (UI). 
Then we obtain 
\[
\EE_9 = 
\bordermatrix{
& -2 & -1 & 0 & 1 & 2 \cr
& \lhd & \lhd & \ominus & \rhd & \rhd \cr
&  & \lhd & \oplus & \rhd &  \cr
& & & \ominus &&
}.
\]
The associated $A$-parameter is $\psi_9 = \psi_{\EE_9} = S_1 \boxtimes S_5 + S_1 \boxtimes S_3 + S_1$.
Note that we can obtain $\EE_9$ from $\EE_8$ by using (P), (C) and (UI).
\end{enumerate}

By Theorem \ref{thm:s-eq}, 
we can check that $\EE_1, \dots, \EE_9$ 
are all of the extended multi-segments $\EE$ such that $\pi(\EE) \cong \pi$.
The relation among $\{\EE_1,\dots,\EE_9\}$ can be written as follows:
\[
\xymatrix{
\EE_1 \ar@{-}[r] \ar@{-}[d] & \EE_2 \ar@{-}[r] \ar@{-}[d] & \EE_3 \\
\EE_4 \ar@{-}[r] & \EE_5 \ar@{-}[r] \ar@{-}[d] & \EE_6 \ar@{-}[d] \\
\EE_7 \ar@{-}[r] & \EE_8 \ar@{-}[r] & \EE_9
}
\]
\par

In \cite[Definition 6.1, Theorem 6.2]{At}, we defined 
an explicit map $\EE \mapsto \hat\EE$ 
such that $\pi(\hat\EE)$ is the Aubert dual of $\pi(\EE)$.
Since $\pi$ is supercuspidal, 
it is fixed by the Aubert duality. 
Hence the set $\{\EE_1,\dots,\EE_9\}$ is stable under $\EE \mapsto \hat\EE$. 
Indeed, it is easy to check that $\hat\EE_i = \EE_{10-i}$ for $1 \leq i \leq 9$.

%\subsection{Example of Algorithm \ref{main.alg}}
\subsection{Example of Algorithm \ref{main.alg}}\label{s.ex2}
Here, we give an example of applying Algorithm \ref{main.alg}. 
\par

For $\ep = (\ep_0,\ep_1,\ep_2) \in \{\pm\}^3$ with $\ep_0\ep_1\ep_2 = 1$, 
consider
\[
\textstyle
\pi^\ep = L\left(\Delta[-\half{1},-\half{5}], |\cdot|^{-\half{1}}, \Delta[\half{3},-\half{5}]; 
\pi((\half{1})^{\ep_1},(\half{3})^{\ep_2},(\half{5})^{\ep_3})\right) \in \Irr(\SO_{31}(F)). 
\]
Then 
\[
\Pi = \left\{
\pi^\ep
\;\middle|\; \ep = (\ep_0,\ep_1,\ep_2) \in \{\pm\}^3,\; \ep_0\ep_1\ep_2 = 1
\right\}
\]
is a non-tempered $L$-packet of $\SO_{31}(F)$.
Let us determine whether $\pi_\ep$ is of Arthur type or not. 
\par

First of all, we apply Step $1^-$ of Algorithm \ref{main.alg}. 
Then $\DD^-(\pi^\ep) = [(-\half{1},2),(-\half{3},1),(-\half{5},1); \pi_1^\ep]$ 
with 
\[
\textstyle
\pi_1^\ep = L\left(\Delta[\half{3},-\half{5}]; \pi((\half{1})^{\ep_1},(\half{3})^{\ep_2},(\half{5})^{\ep_3})\right). 
\]
Next, we apply Step $1^-$ of Algorithm \ref{main.alg}. 

\begin{enumerate}
\item
Suppose that $\ep = (+,+,+)$. 
Then $\DD^+(\pi_1^{(+,+,+)}) = [(\half{3},2),(\half{5},2); \pi_2^{(+,+,+)}]$
with
\[
\textstyle
\pi_2^{(+,+,+)} = L\left(\Delta[\half{1},-\half{3}] ; \pi((\half{1})^{+},(\half{1})^{+},(\half{3})^{+})\right).
\]
Note that $\pi_2^{(+,+,+)}$ is in the situation of Step $2$ in Algorithm \ref{main.alg}.
According to this step, consider 
\[
\psi = S_2 \boxtimes S_1 + S_3\boxtimes S_2 + S_3 \boxtimes S_2.
\]
Then one can check that $\pi_2^{(+,+,+)} \in \Pi_{\psi}$. 
In fact, we have
\[
\pi_2^{(+,+,+)} \cong 
\pi
\bordermatrix{
& 1/2 & 3/2  \cr
& \oplus & \cr
& \oplus & \ominus \cr
& \ominus & \oplus 
}. 
\]
By Step $1^+$ in Algorithm \ref{main.alg}, we have
\[
\pi_1^{(+,+,+)} \cong 
\pi
\bordermatrix{
& 1/2 & 3/2 & 5/2  \cr
& \oplus & & \cr
& & \oplus & \ominus \cr
& & \ominus & \oplus 
}
\cong 
\pi
\bordermatrix{
& 1/2 & 3/2 & 5/2  \cr
& \oplus & & \cr
& & \oplus & \ominus \cr
& & \ominus & \cr
& & & \oplus 
}.
\]
Let $\EE_1$ and $\EE_2$ be the above extended multi-segments 
so that $\pi_1^{(+,+,+)} \cong \pi(\EE_1) \cong \pi(\EE_2)$. 
By Theorem \ref{thm:s-eq}, we see that 
if $\EE$ satisfies that $\pi_1^{(+,+,+)} \cong \pi(\EE)$, 
then $\EE \in \{\EE_1, \EE_2\}$, or $\EE$ is given from $\EE_2$ by (C). 
Since both $\EE_1$ and $\EE_2$ do not satisfy the conditions of Step $1^-$ in Algorithm \ref{main.alg}, 
we conclude that $\pi^{(+,+,+)}$ is not of Arthur type. 

\item
Suppose that $\ep = (-,-,+)$. 
Then $\DD^+(\pi_1^{(-,-,+)}) = [(\half{5},1); \pi_2^{(-,-,+)}]$
with
\[
\textstyle
\pi_2^{(-,-,+)} = \pi((\half{1})^{-},(\half{3})^{-},(\half{3})^{-},(\half{3})^{-},(\half{5})^{+}). 
\]
Moreover, $\DD^+(\pi_2^{(-,-,+)}) = [(\half{3},3), (\half{5},1); \pi_3^{(-,-,+)}]$
with 
\[
\textstyle
\pi_3^{(-,-,+)} = \pi((\half{1})^{-},(\half{1})^{-},(\half{1})^{-},(\half{1})^{-},(\half{3})^{+}). 
\]
Note that $\pi_3^{(-,-,+)}$ is in the situation of Step $2$ in Algorithm \ref{main.alg}.
According to this step, consider 
\[
\psi = (S_2 \boxtimes S_1)^{\oplus 3} + S_3 \boxtimes S_2.
\]
Then one can check that $\pi_3^{(-,-,+)} \in \Pi_{\psi}$. 
In fact, we have
\[
\pi_3^{(-,-,+)} \cong 
\pi
\bordermatrix{
& 1/2 & 3/2  \cr
& \ominus & \cr
& \ominus & \cr
& \ominus & \cr
& \ominus & \oplus 
}. 
\]
By Step $1^+$ in Algorithm \ref{main.alg}, we have
\begin{align*}
\pi_2^{(-,-,+)} 
&\cong 
\pi
\bordermatrix{
& 1/2 & 3/2 & 5/2 \cr
& \ominus && \cr
&& \ominus & \cr
&& \ominus & \cr
&& \ominus & \oplus 
}
\cong 
\pi
\bordermatrix{
& 1/2 & 3/2 & 5/2 \cr
& \ominus && \cr
&& \ominus & \cr
&& \lhd & \rhd \cr
&& \oplus &
}, 
\\
\pi_1^{(-,-,+)} 
&\cong 
\pi
\bordermatrix{
& 1/2 & 3/2 & 5/2 \cr
& \ominus && \cr
&& \ominus & \cr
&& \lhd & \rhd \cr
&&& \oplus 
}
\cong 
\pi
\bordermatrix{
&-1/2& 1/2 & 3/2 & 5/2 \cr
& \oplus & \ominus && \cr
&&& \ominus & \cr
&&& \lhd & \rhd \cr
&&&& \oplus 
}.
\end{align*}
Let $\EE_1$ and $\EE_2$ be the above extended multi-segments 
so that $\pi_1^{(-,-,+)} \cong \pi(\EE_1) \cong \pi(\EE_2)$. 
By Theorem \ref{thm:s-eq}, we see that 
if $\EE$ satisfies that $\pi_1^{(-,-,+)} \cong \pi(\EE)$, 
then $\EE \in \{\EE_1, \EE_2\}$, or $\EE$ is given from $\EE_1$ or $\EE_2$ by (C). 
Since both $\EE_1$ and $\EE_2$ do not satisfy the conditions of Step $1^-$ in Algorithm \ref{main.alg}, 
we conclude that $\pi^{(-,-,+)}$ is not of Arthur type. 

\item
Suppose that $\ep = (-,+,-)$. 
Then $\DD^+(\pi_1^{(-,+,-)}) = [(\half{5},1); \pi_2^{(-,+,-)}]$ 
with
\[
\textstyle
\pi_2^{(-,+,-)} = \pi((\half{1})^{-},(\half{3})^{+},(\half{3})^{+},(\half{3})^{+},(\half{5})^{-}). 
\]
Moreover, $\DD^+(\pi_2^{(-,+,-)}) = [(\half{3},2); \pi_3^{(-,+,-)}]$ 
with
\[
\textstyle
\pi_3^{(-,+,-)} = \pi((\half{1})^{-},(\half{1})^{-},(\half{1})^{-},(\half{3})^{+},(\half{5})^{-}). 
\]
Note that $\pi_3^{(-,+,-)}$ is in the situation of Step $2$ in Algorithm \ref{main.alg}.
According to this step, consider 
\[
\psi = (S_2 \boxtimes S_1)^{\oplus 2} + S_4 \boxtimes S_3.
\]
Then one can check that $\pi_3^{(-,+,-)} \in \Pi_{\psi}$. 
In fact, we have
\[
\pi_3^{(-,+,-)} \cong 
\pi
\bordermatrix{
& 1/2 & 3/2 & 5/2 \cr
& \ominus && \cr
& \ominus && \cr
& \ominus & \oplus & \ominus 
}
\cong 
\pi
\bordermatrix{
& 1/2 & 3/2 & 5/2 \cr
& \ominus & \oplus & \ominus \cr
& \ominus && \cr
& \ominus && 
}.
\]
By Step $1^+$ in Algorithm \ref{main.alg}, we have
\[
\pi_2^{(-,+,-)} \cong 
\pi
\bordermatrix{
& 1/2 & 3/2 & 5/2 \cr
& \ominus & \oplus & \ominus \cr
&& \ominus & \cr
&& \ominus &
},
\quad
\pi_1^{(-,+,-)} \cong 
\pi
\bordermatrix{
& 1/2 & 3/2 & 5/2 \cr
& \ominus & \oplus & \ominus \cr
&& \ominus & \cr
&&& \ominus 
}. 
\]
By the inverse of (UI), we have 
\[
\pi_1^{(-,+,-)} \cong 
\pi
\bordermatrix{
& 1/2 & 3/2 & 5/2 \cr
& \ominus & \oplus & \cr
&& \lhd & \rhd \cr
&&& \ominus 
}. 
\]
It satisfies the conditions of Step $1^-$ in Algorithm \ref{main.alg}. 
Hence
\[
\pi^{(-,+,-)} \cong 
\pi
\bordermatrix{
& -1/2 & 1/2 & 3/2 & 5/2 \cr
& \lhd & \rhd && \cr
& \lhd & \ominus & \oplus & \rhd \cr
&&& \lhd & \rhd \cr
&&&& \ominus 
}. 
\]
We conclude that $\pi^{(-,+,-)}$ is of Arthur type. 

\item
Suppose that $\ep = (+,-,-)$. 
Then $\DD^+(\pi_1^{(+,-,-)}) = [(\half{3},1),(\half{5},1); \pi_2^{(+,-,-)}]$ 
with
\[
\textstyle
\pi_2^{(+,+,+)} = L\left(\Delta[\half{1},-\half{5}] ; \pi((\half{1})^{+},(\half{3})^{-},(\half{3})^{-})\right).
\]
Note that $\pi_2^{(+,-,-)}$ is in the situation of Step $2$ in Algorithm \ref{main.alg}.
According to this step, consider 
\[
\psi = S_3 \boxtimes S_2 + S_4 \boxtimes S_3.
\]
Then one can check that $\pi_2^{(+,-,-)} \in \Pi_{\psi}$. 
In fact, we have
\[
\pi_2^{(+,-,-)} \cong 
\pi
\bordermatrix{
& 1/2 & 3/2 & 5/2 \cr
& \oplus & \ominus & \cr
& \lhd & \ominus & \rhd 
}
\cong 
\pi
\bordermatrix{
& 1/2 & 3/2 & 5/2 \cr
& \oplus & \ominus & \oplus \cr
& \oplus & \ominus &  
}.
\]
By Step $1^+$ in Algorithm \ref{main.alg}, we have
\[
\pi_1^{(+,-,-)} \cong 
\pi
\bordermatrix{
& 1/2 & 3/2 & 5/2 \cr
& \oplus & \ominus & \oplus \cr
& & \oplus & \ominus &  
}
\cong
\pi
\bordermatrix{
& 1/2 & 3/2 & 5/2 \cr
& \oplus & \ominus & \cr
& & \lhd & \rhd \cr
& & & \ominus 
}.
\]
It satisfies the conditions of Step $1^-$ in Algorithm \ref{main.alg}. 
Hence
\[
\pi^{(+,-,-)} \cong 
\pi
\bordermatrix{
& -1/2 & 1/2 & 3/2 & 5/2 \cr
& \lhd & \rhd && \cr
& \lhd & \oplus & \ominus & \rhd \cr
&&& \lhd & \rhd \cr
&&&& \ominus 
}. 
\]
We conclude that $\pi^{(+,-,-)}$ is of Arthur type. 
\end{enumerate}
In conclusion, the non-tempered $L$-packet $\Pi$ of $\SO_{31}(F)$
has exactly two representations of Arthur type, 
and has exactly two representations which are not of Arthur type. 

\if()%%%%
For $\ep = (\ep_0,\ep_1,\ep_2) \in \{\pm\}^3$ with $\ep_0\ep_1\ep_2 = 1$, 
consider
\[
\pi^\ep = L(\Delta[0,-3], \Delta[2,-3]; \pi(0^{\ep_0},1^{\ep_1},2^{\ep_2})) \in \Irr(\Sp_{28}(F)). 
\]
Then 
\[
\Pi = \left\{
\pi^\ep
\;\middle|\; \ep = (\ep_0,\ep_1,\ep_2) \in \{\pm\}^3,\; \ep_0\ep_1\ep_2 = 1
\right\}
\]
is a non-tempered $L$-packet of $\Sp_{28}(F)$.
Let us determine whether $\pi_\ep$ is of Arthur type or not. 

\begin{enumerate}
\item
Suppose that $\ep = (+,+,+)$. 
Then $\DD^+(\pi^{(+,+,+)}) = [(2,2),(3,2); \pi_1^{(+,+,+)}]$
with
\[
\pi_1^{(+,+,+)} = L(\Delta[0,-2], \Delta[1,-2]; \pi(0^+,1^+,1^+)).
\]
Moreover, $\DD^+(\pi_1^{(+,+,+)}) = [(1,2),(2,1); \pi_2^{(+,+,+)}]$
with
\[
\pi_2^{(+,+,+)} = L(\Delta[0,-2], \Delta[0,-1]; \pi(0^+,0^+,1^+)).
\]
Note that $\pi_2^{(+,+,+)}$ is in the situation of Step $2$ in Algorithm \ref{main.alg}.
According to this step, consider 
\[
\psi = S_1 + S_1 + S_2\boxtimes S_2 + S_3 \boxtimes S_3.
\]
Then one can check that $\pi_2^{(+,+,+)} \in \Pi_{\psi}$. 
In fact, we have
\[
\pi_2^{(+,+,+)} \cong 
\pi
\bordermatrix{
& 0 & 1 & 2 \cr
& \oplus & & \cr
& \oplus & & \cr
& \lhd & \rhd & \cr
& \lhd & \oplus & \rhd
}
\cong 
\pi
\bordermatrix{
& 0 & 1 & 2 \cr
& \oplus & & \cr
& \lhd & \oplus & \rhd \cr
& \oplus & & \cr
& \lhd & \rhd & 
}.
\]
By Step $1^+$ in Algorithm \ref{main.alg}, we have
\[
\pi_1^{(+,+,+)} \cong 
\pi
\bordermatrix{
& 0 & 1 & 2 \cr
& \oplus & & \cr
& \lhd & \oplus & \rhd \cr
&& \oplus & \cr
&& \lhd & \rhd 
}.
\]
Let $\EE_1$ be this extended multi-segment so that $\pi_1^{(+,+,+)} \cong \pi(\EE_1)$. 
By Theorem \ref{thm:s-eq}, we see that 
there is no $\EE_1' \not= \EE_1$ such that $\pi_1^{(+,+,+)} \cong \pi(\EE_1')$.
Since $\EE_1$ does not satisfy the conditions of Step $1^+$ in Algorithm \ref{main.alg}, 
we conclude that $\pi^{(+,+,+)}$ is not of Arthur type. 

\item
Suppose that $\ep = (-,-,+)$. 
Then $\DD^+(\pi^{(-,-,+)}) = [(2,1),(3,1); \pi_1^{(-,-,+)}]$ 
with
\[
\pi_1^{(-,-,+)} = L(\Delta[0,-2], \Delta[1,-3]; \pi(0^-,1^-,2^+)).
\]
Moreover, $\DD^+(\pi_1^{(-,-,+)}) = [(1,1),(2,1); \pi_2^{(-,-,+)}]$
with
\[
\pi_2^{(-,-,+)} = L(\Delta[0,-3], \Delta[0,-1]; \pi(0^-,1^-,2^+)).
\]
Note that $\pi_2^{(-,-,+)}$ is in the situation of Step $2$ in Algorithm \ref{main.alg}.
According to this step, consider 
\[
\psi = S_1 + S_2\boxtimes S_2 + S_4 \boxtimes S_4.
\]
Then one can check that $\pi_2^{(-,-,+)} \in \Pi_{\psi}$. 
In fact, we have
\[
\pi_2^{(-,-,+)} \cong 
\pi
\bordermatrix{
& 0 & 1 & 2 & 3 \cr
& \ominus & & &\cr
& \lhd & \rhd & &\cr
& \lhd & \ominus & \oplus & \rhd 
}
\cong 
\pi
\bordermatrix{
& 0 & 1 & 2 & 3 \cr
& \ominus & & &\cr
& \lhd & \ominus & \oplus & \rhd \cr
& \lhd & \rhd & &
}.
\]
By Step $1^+$ in Algorithm \ref{main.alg}, we have
\[
\pi_1^{(-,-,+)} \cong 
\pi
\bordermatrix{
& 0 & 1 & 2 & 3 \cr
& \ominus & & &\cr
& \lhd & \ominus & \oplus & \rhd \cr
& & \lhd & \rhd &
}, 
\quad
\pi^{(-,-,+)} \cong 
\pi
\bordermatrix{
& 0 & 1 & 2 & 3 \cr
& \ominus & & &\cr
& \lhd & \ominus & \oplus & \rhd \cr
& & & \lhd & \rhd
}. 
\]
We conclude that $\pi^{(-,-,+)}$ is of Arthur type. 

\item
Suppose that $\ep = (-,+,-)$. 
Then $\DD^+(\pi^{(-,+,-)}) = [(2,1),(3,1); \pi_1^{(-,+,-)}]$ 
with
\[
\pi_1^{(-,+,-)} = L(\Delta[0,-2], \Delta[1,-3]; \pi(0^-,1^+,2^-)).
\]
Moreover, $\DD^+(\pi_1^{(-,+,-)}) = [(1,1),(2,1); \pi_2^{(-,+,-)}]$ 
with
\[
\pi_2^{(-,+,-)} = L(\Delta[0,-3], \Delta[0,-1]; \pi(0^-,1^+,2^-)).
\]
Note that $\pi_2^{(-,+,-)}$ is in the situation of Step $2$ in Algorithm \ref{main.alg}.
According to this step, consider 
\[
\psi = S_1 + S_2\boxtimes S_2 + S_4 \boxtimes S_4.
\]
Then one can check that $\pi_2^{(-,+,-)} \in \Pi_{\psi}$. 
In fact, we have
\[
\pi_2^{(-,+,-)} \cong 
\pi
\bordermatrix{
& 0 & 1 & 2 & 3 \cr
& \ominus & & &\cr
& \lhd & \rhd & &\cr
& \lhd & \oplus & \ominus & \rhd 
}
\cong 
\pi
\bordermatrix{
& 0 & 1 & 2 & 3 \cr
& \ominus & & &\cr
& \lhd & \oplus & \ominus & \rhd \cr
& \lhd & \rhd & &
}.
\]
By Step $1^+$ in Algorithm \ref{main.alg}, we have
\[
\pi_1^{(-,+,-)} \cong 
\pi
\bordermatrix{
& 0 & 1 & 2 & 3 \cr
& \ominus & & &\cr
& \lhd & \oplus & \ominus & \rhd \cr
& & \lhd & \rhd &
}, 
\quad
\pi^{(-,+,-)} \cong 
\pi
\bordermatrix{
& 0 & 1 & 2 & 3 \cr
& \ominus & & &\cr
& \lhd & \oplus & \ominus & \rhd \cr
& & & \lhd & \rhd
}. 
\]
We conclude that $\pi^{(-,+,-)}$ is of Arthur type. 

\item
Suppose that $\ep = (+,-,-)$. 
Then $\DD^+(\pi^{(+,-,-)}) = [(2,2),(3,2); \pi_1^{(+,-,-)}]$ 
with
\[
\pi_1^{(+,-,-)} = L(\Delta[0,-2], \Delta[1,-2]; \pi(0^+,1^-,1^-)).
\]
Moreover, $\DD^+(\pi_1^{(+,-,-)}) = [(1,1); \pi_2^{(+,-,-)}]$ 
with
\[
\pi_2^{(+,-,-)} = L(\Delta[0,-2]^2; \pi(0^+,1^-,1^-)).
\]
Note that $\pi_2^{(+,-,-)}$ is in the situation of Step $2$ in Algorithm \ref{main.alg}.
According to this step, consider 
\[
\psi = S_1 + S_3 \boxtimes S_3 + S_3 \boxtimes S_3.
\]
Then one can check that $\pi_2^{(+,-,-)} \in \Pi_{\psi}$. 
In fact, we have
\[
\pi_2^{(+,-,-)} \cong 
\pi
\bordermatrix{
& 0 & 1 & 2 \cr
& \oplus & & \cr
& \lhd & \ominus & \rhd \cr
& \lhd & \ominus & \rhd
}
\cong 
\pi
\bordermatrix{
& 0 & 1 & 2 \cr
& \oplus & \ominus & \oplus \cr
& \oplus & \ominus & \oplus \cr
& \oplus & & 
}.
\]
By Step $1^+$ in Algorithm \ref{main.alg}, we have
\[
\pi_1^{(+,-,-)} \cong 
\pi
\bordermatrix{
& 0 & 1 & 2 \cr
& \oplus & \ominus & \oplus \cr
& \oplus & \ominus & \oplus \cr
& & \oplus &
}.
\]
Let $\EE_1$ be this extended multi-segment so that $\pi_1^{(+,-,-)} \cong \pi(\EE_1)$. 
By Theorem \ref{thm:s-eq}, we see that 
there are exactly 14 extended multi-segments $\EE_1, \dots, \EE_{14}$ 
such that $\pi(\EE_i) \cong \pi_1^{(+,-,-)}$ for $1 \leq i \leq 14$. 
The associated $A$-parameters $\psi_i = \psi_{\EE_i}$ are listed as follows: 
\begin{align*}
\psi_1 &= S_3 \boxtimes S_3 + S_3 \boxtimes S_3 + S_3, \\
\psi_2 &= S_2 \boxtimes S_4 + S_1 + S_3 \boxtimes S_3 + S_3, \\
\psi_3 &= S_1 \boxtimes S_5 + S_2 \boxtimes S_2 + S_3 \boxtimes S_3 + S_3, \\
\psi_4 &= S_1 + S_3 \boxtimes S_3 + S_3 + S_4 \boxtimes S_2, \\
\psi_5 &= S_1 \boxtimes S_5 + S_1 \boxtimes S_3 + S_1 + S_3 \boxtimes S_3 + S_3,\\
\psi_6 &= S_2 \boxtimes S_4 + S_1 + S_1 + S_3 + S_4 \boxtimes S_2, \\
\psi_7 &= S_1 \boxtimes S_5 + S_1 + S_3 \boxtimes S_3 + S_3 + S_3, \\
\psi_8 &= S_1 \boxtimes S_5 + S_1 + S_2 \boxtimes S_2 + S_3 + S_4 \boxtimes S_2, \\
\psi_9 &= S_2 \boxtimes S_2 + S_3 \boxtimes S_3 + S_4 \boxtimes S_2, \\
\psi_{10} &= S_1 \boxtimes S_5 + S_1 \boxtimes S_3 + S_1 + S_1 + S_3 + S_4 \boxtimes S_2, \\
\psi_{11} &= S_2 \boxtimes S_4 + S_1 + S_2 \boxtimes S_2 + S_4 \boxtimes S_2, \\
\psi_{12} &= S_1 \boxtimes S_5 + S_2 \boxtimes S_2 + S_2 \boxtimes S_2 + S_4 \boxtimes S_2, \\
\psi_{13} &= S_1 \boxtimes S_3 + S_1 + S_3 \boxtimes S_3 + S_4 \boxtimes S_2, \\
\psi_{14} &= S_1 \boxtimes S_3 + S_1 \boxtimes S_3 + S_1 + S_2 \boxtimes S_2 + S_4 \boxtimes S_2. 
\end{align*}
However, there are no $\EE_i$ satisfying the conditions of Step $1^+$ in Algorithm \ref{main.alg}. 
Hence we conclude that $\pi^{(+,-,-)}$ is not of Arthur type. 
\end{enumerate}
In conclusion, the non-tempered $L$-packet $\Pi$ of $\Sp_{28}(F)$
has exactly two representations of Arthur type, 
and has exactly two representations which are not of Arthur type. 
\fi%%%

%\section{Proof of main results}
%\section{Proof of main results}
\section{Proof of main results}\label{s.proof}
Now we prove the claims in Algorithm \ref{main.alg} and Theorem \ref{thm:s-eq}. 

%\subsection{Derivatives of $\pi(\EE)$}
\subsection{Derivatives of $\pi(\EE)$}
For an irreducible representation $\pi$ of $G_n$ and for $\epsilon \in \{\pm\}$, 
we have defined $\DD^\epsilon_\rho(\pi)$ in Definition \ref{d.d}. 
The description of $\DD^\epsilon_\rho(\pi(\EE))$ was given in \cite[Section 5]{At}. 
\begin{thm}\label{thm5}
Let $\EE = \cup_{\rho}\{ ([A_i,B_i]_{\rho}, l_i, \eta_i) \}_{i \in (I_\rho,>)}$
be an extended multi-segment for $G_n$
such that $\pi(\EE) \not= 0$. 
Suppose that the order $>$ on $I_\rho$ is very admissible for any $\rho$.

\begin{enumerate}
\item
Write 
\[
\DD^+_\rho(\pi(\EE)) = \left[(x,k_0),(x+1,k_1),\dots,(x+t-1,k_{t-1}); \pi^+ \right]. 
\]
Assume that $t > 0$ and $x \geq 1$.
Then we can construct $\EE^*$ from $\EE$ by a finite chain of 
operations (C), (UI) and their inverses such that 
\begin{itemize}
\item
$\pi(\EE^*) \cong \pi(\EE)$;
\item
$\max\{ B \;|\; ([A,B]_\rho,l,\eta) \in \EE^*\} = x$; 
\item
the multi-set $\{ [A,B]_\rho \;|\; ([A,B]_\rho, l,\eta) \in \EE^*, \; B = x\}$ 
is exactly equal to 
\[
\bigsqcup_{j=1}^t \{ \underbrace{[x+j-1,x]_\rho, \dots, [x+j-1,x]_\rho}_{k_{j-1}-k_j} \}
\]
with $k_t \coloneqq 0$;
\item
if we define $\EE^+$ from $\EE^*$ by replacing 
each $([x+j-1,x]_\rho, l_j,\eta_j)$ with $([x+j-2,x-1]_\rho, l_j,\eta_j)$ for $1 \leq  j \leq t$, 
then $\pi^+ \cong \pi(\EE^+)$.
\end{itemize}

\item
Write 
\[
\DD^-_\rho(\pi(\EE)) = \left[(x,k_0),(x-1,k_1),\dots,(x-t+1,k_{t-1}); \pi^- \right]. 
\]
Assume that $t > 0$ and $x < 0$.
Then we can construct $\EE^*$ from $\EE$ by a finite chain of 
operations (C), (UI), (P) and their inverses such that 
\begin{itemize}
\item
$\pi(\EE^*) \cong \pi(\EE)$;
\item
$\min\{ B \;|\; ([A,B]_\rho,l,\eta) \in \EE^*\} = x$; 
\item
the multi-set $\{ [A,B]_\rho \;|\; ([A,B]_\rho, l,\eta) \in \EE^*, \; B = x\}$ 
is exactly equal to 
\[
\bigsqcup_{j=1}^t \{ \underbrace{[-x+j-1,x]_\rho, \dots, [-x+j-1,x]_\rho}_{k_{j-1}-k_j} \}
\]
with $k_t \coloneqq 0$;
\item
if we define $\EE^-$ from $\EE^*$ by replacing 
each $([-x+j-1,x]_\rho, l_j,\eta_j)$ with $([-x+j-2,x+1]_\rho, l_j-1,\eta_j)$ for $1 \leq  j \leq t$, 
then $\pi^- \cong \pi(\EE^-)$.
\end{itemize}
\end{enumerate}
\end{thm}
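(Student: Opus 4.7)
The plan is to deduce Theorem \ref{thm5} from the explicit derivative computations carried out in \cite[Section 5]{At}, using the operations (C), (UI), (P) as tools to put $\EE$ in a normal form in which the derivative formula becomes transparent. The strategy is inductive: in Part (1) I would first compute $D^{(k_0)}_{\rho|\cdot|^x}(\pi(\EE))$ alone, showing that $\EE$ can be brought via (C), (UI) to an $\EE^*$ whose extended segments with $B = x$ are exactly those that are ``active'' for this derivative, and then iterate to account for $(x+1,k_1), \dots, (x+t-1,k_{t-1})$.

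First, I would recall from \cite[Section 5]{At} that the nonvanishing of $D_{\rho|\cdot|^x}(\pi(\EE))$ for $x \geq 1$ is controlled by the extended segments with $B_i = x$ and that the $k_0$-th derivative $D^{(k_0)}_{\rho|\cdot|^x}(\pi(\EE))$ is obtained by modifying a prescribed collection of such segments. Since in general these ``active'' segments need not include every segment with $B_i = x$, the role of (C) and (UI) is to rearrange $\EE$ so that exactly the active segments appear at the ``bottom row'' with $B_i = x$, with the correct lengths and multiplicities. Concretely, I would use (C) (with the explicit $(l_i', \eta_i')$ formulas from \cite[Theorem 1.3]{X3}) to permute adjacent extended segments in the very admissible order, and (UI) (with \cite[Theorem 5.2]{At}) to merge a segment $[A_i, x]_\rho$ with an overlapping segment so that $x$ becomes the minimum of the $B$-values. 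After this normalization, the multi-set of lengths at $B = x$ is forced by the combinatorics of the derivative formula to be $\{[x, x]_\rho^{k_0 - k_1}, [x+1, x]_\rho^{k_1 - k_2}, \dots, [x+t-1, x]_\rho^{k_{t-1}}\}$, matching the statement.

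The inductive step comes from applying this analysis to $\pi^* \coloneqq D^{(k_0)}_{\rho|\cdot|^x}(\pi(\EE))$, whose associated (modified) extended multi-segment has the segments with $B = x$ replaced by segments with $B = x - 1$ of the same length, and then the role of $x$ is played by $x + 1$ for the next derivative. For Part (2), the argument is parallel but with two adjustments: the operation (P) must also be invoked because segments at small $B$ may not appear in $\EE$ at all, and when $x = -1/2$ the segment $[-1/2, 1/2]_\rho$ is empty, forcing the special treatment of $j=1$ described in the Algorithm. The sign change $\eta_i \mapsto \eta_i$ with $l_i \mapsto l_i - 1$ on the transformed segments is extracted directly from the $\rho|\cdot|^x$-derivative formula with $x < 0$ in \cite[Section 5]{At}.

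The main obstacle I anticipate is verifying that the normalization via (C), (UI), (P) can always be reached from the given very admissible order without introducing pathologies; that is, that the hypotheses required to apply (UI) (conditions (1)--(3) in \cite[Section 5.2]{At}) are indeed satisfied at each step when two segments must be combined to bring $x$ to the ``bottom row.'' This should be handled by an inductive argument on the number of extended segments with $B_i$ close to $x$ (or $A_i$ close to $x$ in the negative case), combined with the fact that non-vanishing of $\pi(\EE)$, together with the very admissibility of the order, already imposes the required compatibility of the $(l_i, \eta_i)$ between adjacent extended segments. Once normalization is established, the conclusion follows by direct matching with the derivative formulas.
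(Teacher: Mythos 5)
Your proposal is correct and takes essentially the same approach as the paper. The paper's proof of this theorem is a one-line citation to Theorem 5.1 together with Algorithms 5.5 and 5.6 in \cite{At}, and your outline --- normalizing $\EE$ via (C), (UI), (P) so that the ``active'' extended segments sit at $B = x$ with the prescribed lengths and multiplicities, then matching against the explicit derivative formulas of \cite[Section 5]{At} --- is precisely the content of those algorithms.
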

\begin{proof}
This is Theorem 5.1 together with Algorithms 5.5 and 5.6 in \cite{At}.
\end{proof}

%\subsection{Proof of Algorithm \ref{main.alg}}
\subsection{Proof of Algorithm \ref{main.alg}}\label{proof.alg}
First, we show the claims in Algorithm \ref{main.alg}. 

\begin{proof}[Proof of claims in Algorithm \ref{main.alg}]
Step $1^+$ and Step $1^-$ are Theorem \ref{thm5}.
So we consider Step $2$.
\par

Let 
$\pi = L(\Delta_{\rho_1}[x_1,y_1], \dots, \Delta_{\rho_r}[x_r,y_r]; \pi(\phi,\ep))$
and 
\begin{align*}
k_{\rho,z} 
&\coloneqq 
\#\{i \in \{1,\dots,r\} \;|\; \text{$\rho_i \cong \rho$, and, $x_i = z$ or $y_i  =-z$}\}
+m_\phi(\rho \boxtimes S_{2z+1})
\end{align*}
be as in Step $2$. 
By the assumption, 
we have $\min_{1 \leq i \leq r}\{x_i\} \in \{0,1/2\}$ so that $x_i \geq 0$ for $1 \leq i \leq r$.
Since $x_i \geq y_i$ and $x_i+y_i < 0$, we have $y_i < 0$ for $1 \leq i \leq r$.
Therefore, for $\rho \in \Cusp^\bot(\GL_d(F))$ and $x \in (1/2)\Z$ with $x \geq 0$, 
the multiplicity $M_{\rho,x}$ of $\rho|\cdot|^x$ in the extended cuspidal support $\esupp(\pi)$ is given by 
\[
M_{\rho,x} = \sum_{\substack{z \in (1/2)\Z \\ z \geq x}} k_{\rho,z}.
\]
In particular, 
\[
k_{\rho,x} = M_{\rho,x}-M_{\rho,x+1}.
\]
\par

Now suppose that $\pi$ is of Arthur type. 
Then by Theorem \ref{thm5} 
(or, more precisely, by Theorem 5.1 together with Algorithms 5.5 and 5.6 in \cite{At})
and the assumption in Step $2$, 
we obtain an extended multi-segment $\EE^*$ with $\pi \cong \pi(\EE^*)$ such that 
\[
([A,B]_\rho,l,\eta) \in \EE^* \implies B \in \{0,1/2\}.
\]
The associated $A$-parameter $\psi = \psi_{\EE^*}$ satisfies that 
\[
\rho \boxtimes S_{a} \boxtimes S_{b} \subset \psi \implies a-b \in \{0,1\}.
\]
In particular, one can write
\[
\psi_\EE = \bigoplus_\rho \bigoplus_{z \in (1/2)\Z_{\geq 0}} 
\left(\rho \boxtimes S_{z+1+\delta_z} \boxtimes S_{z+1-\delta_z}\right)^{\oplus m_{\rho,z}}
\]
for some multiplicity $m_{\rho,z} \geq 0$, 
where $\delta_{z} \in \{0,1/2\}$ with $z+\delta_z \in \Z$.
Since 
\[
S_{z+1+\delta_z} \otimes S_{z+1-\delta_z} 
\cong 
\left\{
\begin{aligned}
& S_1+S_3+\dots+S_{2z+1} \iif z \in \Z, \\
& S_2+S_4+\dots+S_{2z+1} \iif z \not\in \Z, 
\end{aligned}
\right. 
\]
by using Proposition \ref{ex.supp},
for $\rho \in \Cusp^\bot(\GL_d(F))$ and $x \in (1/2)\Z$ with $x \geq 0$, 
the multiplicity of $\rho|\cdot|^x$ in the extended cuspidal support $\esupp(\pi)$ is given by
\[
M_{\rho,x} = \sum_{\substack{z \in (1/2)\Z \\ z \geq x}} (z-x+1)m_{\rho,z}.
\]
Since
\[
k_{\rho,x} = M_{\rho,x}-M_{\rho,x+1} = \sum_{\substack{z \in (1/2)\Z \\ z \geq x}} m_{\rho,z}, 
\]
we have 
\[
m_{\rho,x} = k_{\rho,x} - k_{\rho,x+1}.
\]
In conclusion, if $\pi$ is of Arthur type, 
we must have $k_{\rho,x} \geq k_{\rho,x+1}$
for $\rho \in \Cusp^\bot(\GL_d(F))$ and $x \in (1/2)\Z$ with $x \geq 0$. 
In this case, the $A$-parameter $\psi$ is 
nothing but the one in Step $2$.
This completes the proof.
\end{proof}

%\subsection{Proof of Theorem \ref{thm:s-eq}}\label{proof.eq}
\subsection{Proof of Theorem \ref{thm:s-eq}}\label{proof.eq}
Next we prove Theorem \ref{thm:s-eq}. 
The ``if'' part follows from \cite[Theorem 1.3]{X3}, \cite[Theorem 5.2]{At} 
and the definition of $\pi(\EE)$ (see \cite[Section 3.2]{At}). 
\par

We prove the ``only if'' part
by induction on the rank $n$ of $G_n$.
Let $\EE_1$ and $\EE_2$ be extended multi-segments for $G_n$ 
such that $\pi(\EE_1) \cong \pi(\EE_2) \not= 0$. 

\begin{description}
\item[Case 1]
Consider the case where $\EE_1$ and $\EE_2$ are \emph{non-negative}, i.e., 
for $i \in \{1,2\}$, 
\[
([A,B]_\rho,l,\eta) \in \EE_i \implies B \geq 0. 
\]
Then we show that 
$\EE_2$ can be obtained from $\EE_1$ by a finite chain of the operations (C), (UI) and their inverses. 
\par

If $D_{\rho|\cdot|^x}(\pi(\EE_1)) = 0$ for all $\rho \in \Cusp^\bot(\GL_d(F))$ and $x > 0$, 
then $D_{\rho|\cdot|^x}(\pi(\EE_1)) = 0$ for all $\rho \in \Cusp^\bot(\GL_d(F))$ and $x \in \R$ with $x \not= 0$.
In this case, by replacing $\EE_1$ and $\EE_2$ using Theorem \ref{thm5}, 
we may assume that 
for $i \in \{1,2\}$, 
\[
([A,B]_\rho,l,\eta) \in \EE_i \implies B = 0. 
\]
Then by considering the extended cuspidal support of $\pi(\EE_1)$, 
we can see that $\psi_{\EE_1} \cong \psi_{\EE_2}$. 
See the argument in Section \ref{proof.alg}. 
Hence we have $\EE_1 = \EE_2$. 
\par

Suppose that $D_{\rho|\cdot|^x}(\pi(\EE_1)) \not= 0$ 
for some $\rho \in \Cusp^\bot(\GL_d(F))$ and $x > 0$. 
Write
\[
\DD^+_\rho(\pi) = \left[(x,k_0),(x+1,k_1),\dots,(x+t-1,k_{t-1}); \pi^+\right]. 
\]
Then $t > 0$ and $x > 0$.
By replacing $\EE_1$ and $\EE_2$ using Theorem \ref{thm5}, 
we may assume that for $i \in \{1,2\}$, 
\begin{itemize}
\item
$\max\{ B \;|\; ([A,B]_\rho,l,\eta) \in \EE_i\} = x$; and
\item
the multi-set $\{ [A,B]_\rho \;|\; ([A,B]_\rho, l,\eta) \in \EE_i, \; B = x\}$ 
is exactly equal to 
\[
\bigsqcup_{j=1}^t \{ \underbrace{[x+j-1,x]_\rho, \dots, [x+j-1,x]_\rho}_{k_{j-1}-k_j} \}
\]
with $k_t \coloneqq 0$.
\end{itemize}
Let $\EE_i'$ be defined from $\EE_i$ by removing 
all extended segments of the form $([x+j-1,x]_\rho,l_j,\eta_j)$ for $1 \leq j \leq t$. 
Then by definition of $\pi(\EE)$, 
we see that $\pi(\EE_1) \cong \pi(\EE_2) \not= 0$ implies that $\pi(\EE'_1) \cong \pi(\EE'_2) \not= 0$. 
By the inductive hypothesis, 
$\EE_2'$ can be given from $\EE_1'$ by an operator 
which is a finite chain of the operations (C), (UI) and their inverses. 
Then $\EE_2$ can be given from $\EE_1$ by the same operator. 

\item[Case 2]
We prove the general case. 
For $i \in \{1,2\}$ and $z \in \Z$ with $z > 0$, 
let $\EE_{i,z}$ be given from $\EE_i$ by replacing 
each element $([A,B]_\rho,l,\eta) \in \EE_i$ with $([A+z,B+z]_\rho,l,\eta)$. 
Take $z \gg 0$ so that $\EE_{1,z}$ and $\EE_{2,z}$ are both non-negative.
Note that $\pi(\EE_{i,z}) \not= 0$ for $i \in \{1,2\}$.
Write
\begin{align*}
\pi(\EE_{1,z}) &= L(\Delta_{\rho_1}[x_1,y_1], \dots, \Delta_{\rho_r}[x_r,y_r]; \pi(\phi_z, \ep_z)), \\
\pi(\EE_{2,z}) &= L(\Delta_{\rho'_1}[x'_1,y'_1], \dots, \Delta_{\rho'_{r'}}[x'_{r'},y'_{r'}]; \pi(\phi'_z, \ep'_z)).
\end{align*}
Then by \cite[Theorem 3.6]{At}, 
\begin{align*}
\pi(\EE_{1}) &= L(\Delta_{\rho_1}[x_1-z,y_1+z], \dots, \Delta_{\rho_r}[x_r-z,y_r+z]; \pi(\phi, \ep)), \\
\pi(\EE_{2}) &= L(\Delta_{\rho'_1}[x'_1-z,y'_1+z], \dots, \Delta_{\rho'_{r'}}[x'_{r'}-z,y'_{r'}+z]; \pi(\phi', \ep')) ,
\end{align*}
where 
\begin{itemize}
\item
$\phi$ is given from $\phi_z$ 
by replacing each $\rho \boxtimes S_a \subset \phi_z$ with $\rho \boxtimes S_{a-2z}$; 
\item
$\ep(\rho \boxtimes S_{a-2z}) = \ep_z(\rho \boxtimes S_{a})$; and 
\item
$(\phi',\ep')$ is given from $(\phi'_z,\ep'_z)$ by the same way. 
\end{itemize}
Since $\pi(\EE_1) \cong \pi(\EE_2)$, we see that 
\begin{itemize}
\item
if we consider the multi-sets
$X = \{ \Delta_{\rho_1}[x_1-z,y_1+z], \dots, \Delta_{\rho_r}[x_r-z,y_r+z] \}$
and 
$X' = \{ \Delta_{\rho'_1}[x'_1-z,y'_1+z], \dots, \Delta_{\rho'_{r'}}[x'_{r'}-z,y'_{r'}+z] \}$, 
any element in $X \setminus (X \cap X')$ or $X' \setminus (X \cap X')$
is of the form $\Delta_\rho[-l+z, -l+1-z]$ or $\Delta_\rho[-l-1/2+z, -l+1/2-z]$
for some $l \in \Z$ with $l > 0$;
\item
as a virtual representation, 
$\phi_z - \phi_z'$ is a sum of representations of the form $\rho \boxtimes S_{2z}$
(with possibly negative multiplicities). 
\end{itemize}
Moreover, by definition and \cite[Theorem 1.3]{At}, 
for $i \in \{1,2\}$, 
if $([A,B]_\rho,l,\eta) \in \EE_i$, 
then 
\begin{itemize}
\item
$A+B \geq 0$; 
\item
$B+l \geq -1/2$; 
\item
if $B+l = -1/2$, then $\eta \in \{\pm1\}$ is uniquely determined. 
\end{itemize}
Therefore, for $i \in \{1,2\}$, 
one can define $\EE'_{i,z}$ from $\EE_{i,z}$
by repeatedly adding $([l-1+z,-l+z]_\rho,l,+1)$ for suitable $l > 0$
or $([l-1/2+z,-l-1/2+z]_\rho,l,+1)$ for suitable $l \geq 0$
such that $\pi(\EE'_{1,z}) \cong \pi(\EE'_{2,z})$. 
Let $\EE'_{i}$ be given from $\EE'_{i,z}$ by replacing 
each element $([A,B]_\rho,l,\eta) \in \EE'_{i,z}$ with $([A-z,B-z]_\rho,l,\eta)$.  
Then by construction, 
$\EE'_{i}$ is given from $\EE_i$ by applying the operation (P) several times. 
Since $\pi(\EE'_{1,z}) \cong \pi(\EE'_{2,z})$, by Case 1, 
$\EE'_{2,z}$ is given from $\EE'_{1,z}$ 
by a finite chain of the operations (C), (UI) and their inverses. 
Then $\EE'_2$ can be given from $\EE'_1$ by the same operator. 
In conclusion, 
$\EE_2$ can be given from $\EE_1$ 
by a finite chain of the operations (C), (UI), (P) and their inverses. 
\end{description}
This completes the proof of Theorem \ref{thm:s-eq}.

%References


\begin{thebibliography}{30}
    
\bibitem{Ar}
{J. Arthur}, 
{\em The endoscopic classification of representations. Orthogonal and symplectic groups}. 
\emph{American Mathematical Society Colloquium Publications,} {\bf61}. 
{\it American Mathematical Society, Providence, RI,} 2013. xviii+590 pp.

\bibitem{At}
{H. Atobe}, 
{\em Construction of local $A$-packets}. 
Preprint (2021), arXiv:2012.07232v2.

\bibitem{At2}
{H. Atobe}, 
{\em On the socles of certain parabolically induced representations of p-adic classical groups}, 
Preprint (2021), arXiv:2109.12819v1.

\bibitem{AM}
{H. Atobe and A. M{\'i}nguez}, 
{\em The explicit Zelevinsky--Aubert duality}. 
Preprint (2020), arXiv:2008.05689v2.

\bibitem{Moe06a}
{C. M{\oe}glin}, 
{\em Paquets d'Arthur pour les groupes classiques; point de vue combinatoire}.
Preprint (2006), arXiv:math/0610189v1. 

\bibitem{Moe-comp}
{C. M{\oe}glin}, 
{\em Comparaison des param{\`e}tres de Langlands 
et des exposants {\`a} l'int{\'e}rieur d'un paquet d'Arthur}. 
{\it J.~Lie Theory} {\bf19} (2009), no.~4, 797--840.

\bibitem{Moe11c}
{C. M{\oe}glin}, 
{\em Multiplicit{\'e} $1$ dans les paquets d'Arthur aux places $p$-adiques}. 
\emph{On certain $L$-functions}, 333--374, 
\emph{Clay Math.~Proc.,} {\bf13}, {\it Amer.~Math.~Soc., Providence, RI,} 2011.

\bibitem{X2}
{B. Xu}, 
{\em On M{\oe}glin's parametrization of Arthur packets for $p$-adic quasisplit $\Sp(N)$ and $\SO(N)$}. 
\emph{Canad. J. Math.} {\bf69} (2017), no.~4, 890--960.

\bibitem{X3}
{B. Xu}, 
{\em A combinatorial solution to M{\oe}glin's parametrization of Arthur packets 
for $p$-adic quasisplit $\Sp(N)$ and $\mathrm{O}(N)$}. 
\emph{J. Inst. Math. Jussieu} {\bf20} (2021), no.~4, 1091--1204.

\end{thebibliography}
\end{document}